\newtheorem{cor}{Corollary}
\newtheorem{prop}{Proposition}
\newtheorem{theorem}{Theorem}
\newtheorem{lemma}{Lemma}
\def\Wd{\operatorname{Wd}}
\def\Id{\operatorname{Id}}
\begin{document}
\title{Free subalgebras of Lie algebras close to nilpotent}
\author{Alexey Belov and Roman Mikhailov}

\begin{abstract}
We prove that for every automata algebra of exponential growth, the
associated Lie algebra contains a free subalgebra. For $n\geq 1$,
let $L_{n+2}$ be a Lie algebra with generator set $x_1,\dots,
x_{n+2}$ and the following relations: for $k\leq n$, any commutator
of length $k$ which consists of fewer than $k$ different symbols
from $\{x_1,\dots,x_{n+2}\}$ is zero. As an application of this
result about automata algebras, we prove that for every $n\geq 1$,
$L_{n+2}$ contains a free subalgebra. We also prove the similar
result about groups defined by commutator relations. Let $G_{n+2}$
be a group with $n+2$ generators $y_1,\dots, y_{n+2}$ and the
following relations: for $k=3,\dots, n$, any left-normalized
commutator of length $k$ which consists of fewer than $k$ different
symbols from $\{y_1,\dots,y_{n+2}\}$ is trivial. Then the group
$G_{n+2}$ contains a $2$-generated free subgroup.

Main technical tool is combinatorics of periodical sequences and
period switching.
\end{abstract}

\maketitle

\section{Introduction}
Let $A$ be an associative algebra over a commutative ring with
identity, generated by a set $S$. Denote by $A^{\sim}$ the Lie
algebra with the same set of generators $S$ and operation
$[u,v]=uv-vu,\ u,v\in A.$ In other words $A^\sim$ is the Lie
subalgebra of $A^-$ generated by the given set $S$. The algebra
$A^{\sim}$ clearly depends on the choice of the set of generators of
$A$.

For $n\geq 1$, let $L_{n+2}$ be a Lie algebra with generator set
$x_1,\dots, x_{n+2}$ and following relations: for $k\leq n$, any
commutator of length $k$ which consists of fewer than $k$
different symbols from $\{x_1,\dots,x_{n+2}\}$ is zero. For
example, the trivial commutators in $L_n$, which correspond to the
case $k=3$, are:
$$
[[x_i,x_j],x_i],\ i\neq j.
$$
One of the main results of this paper is following:

\begin{theorem}\label{lie}
For every $n\geq 1,$ $L_{n+2}$ contains a free Lie subalgebra.
\end{theorem}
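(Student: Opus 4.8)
The plan is to exhibit $L_{n+2}$ as mapping onto the associated Lie algebra of a suitable automata algebra of exponential growth and then to invoke the main result on automata algebras stated in the abstract. Let $B=B_{n+2}$ be the monomial associative algebra with generators $x_1,\dots,x_{n+2}$ defined by killing every word of length at most $n$ in which some letter occurs twice; equivalently, $B$ has a linear basis consisting of the ``good'' words, those in which every factor of length $\le n$ has pairwise distinct letters. Since the set of forbidden words is finite, $B$ is a finitely presented monomial algebra, hence an automata algebra. Its growth is exponential: a good word $a_1\cdots a_m$ with $m\ge n-1$ can be prolonged on the right by any letter lying outside the $(n-1)$-element set $\{a_{m-n+2},\dots,a_m\}$, and since $(n+2)-(n-1)=3$ there are always at least three admissible choices, so $\dim B_m\ge 3^{\,m-n+1}$ for $m$ large.

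Next I would verify that sending $x_i$ to $x_i$ gives a surjection $L_{n+2}\twoheadrightarrow B^{\sim}$, which only requires checking that the defining relations of $L_{n+2}$ hold in $B$. Expanding an arbitrary iterated bracket of length $k\le n$ in the $x_i$ via $[u,v]=uv-vu$ yields a signed sum of words of length $k$, each a rearrangement of the same multiset of $k$ letters; if fewer than $k$ distinct symbols occur in the bracket, then every word in this sum has a repeated letter among its $k\le n$ positions and therefore lies in the defining ideal of $B$. Hence each such bracket is zero in $B$, the map $L_{n+2}\to B^{\sim}$ is well defined, and it is onto because $B^{\sim}$ is by definition the Lie subalgebra generated by the images of the $x_i$.

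Now apply the main theorem: $B^{\sim}$ contains a free Lie subalgebra $F$, which we may take to be freely generated by two elements $f_1,f_2$ that are concrete Lie polynomials in $x_1,\dots,x_{n+2}$. Let $\tilde f_1,\tilde f_2\in L_{n+2}$ be the same Lie polynomials; since $F$ is free there is a homomorphism $\psi\colon F\to L_{n+2}$ with $\psi(f_i)=\tilde f_i$, and composing $\psi$ with the surjection $L_{n+2}\twoheadrightarrow B^{\sim}$ sends $f_i$ back to $f_i$, i.e. recovers the inclusion $F\hookrightarrow B^{\sim}$. Since that composite is injective, so is $\psi$, and thus $\psi(F)\cong F$ is a free Lie subalgebra of $L_{n+2}$, which proves the theorem.

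The real work is encapsulated in the theorem about automata algebras, so the argument above is essentially bookkeeping; the one point that deserves attention is the calibration of $B$. It must be taken large enough to have exponential growth — which forces the use of all $n+2$ generators, leaving exactly three free choices at each position — yet small enough that all the bracket relations of $L_{n+2}$ are preserved, and one should not skip checking that in the monomial algebra $B$ no collapsing beyond the good words occurs, so that $B$ is genuinely infinite-dimensional of exponential growth.
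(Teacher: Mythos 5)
Your proposal is correct and follows essentially the same route as the paper: your algebra $B$ is exactly the paper's monomial algebra $A_{n+2}$ (a word of length $k\le n$ has fewer than $k$ distinct letters iff some letter repeats), which is finitely presented monomial, hence automata, of exponential growth, so Theorem~\ref{main} applies, and the free subalgebra is pulled back along the surjection $L_{n+2}\twoheadrightarrow A_{n+2}^{\sim}$. The only differences are cosmetic: you count growth by greedy right-extension ($\ge 3$ choices per letter) instead of the paper's substitutions in $(x_1\cdots x_{n+1})^{\infty}$, and you spell out the lifting argument that the paper leaves implicit in its ``hence''.
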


The proof of theorem \ref{lie} is based on the theory of monomial
algebras. An algebra with basis $X$ is called {\it monomial} if
all its defining relations are of the form $u=0$, where $u$ is a
word written in $X$. Let $A$ be a finitely generated algebra with
generators $x_1,\dots,x_s$. The {\it Growth function} $V_A(n)$
equal, the dimension of the space generated by words of length
$\le n$. If $V_A(n)$ grows exponentially, then $A$ has {\it
exponential growth}; if polynomially, then $A$ has {\it polynomial
growth}. {\it Intermediate growth} is also possible. The
polynomial or exponential growth property does not depend on the
choice of the generator set.

For $n\geq 1$, let $A_{n+2}$ be the monomial algebra with
generators $x_1,\dots, x_{n+2}$ and the following relations:
$u(x_1,\dots,x_{n+2})=0$ if $|u|=k$ $(k\leq n)$ and $u$ consists
of fewer than $k$ symbols from $\{x_1,\dots, x_{n+2}\}.$ Clearly,
the Lie algebra $A_{n+2}^{\sim}$ is a quotient of $L_{n+2}$.
Hence, Theorem \ref{lie} will follow if we will be able to prove
that $A_{n+2}^{\sim}$ contains a free subalgebra. The algebra
$A_{n+2}$ has an alternative description, based on the following
property: $u(x_1,\dots,x_{n+2})=0$ in the algebra $A_{n+2}$ if the
distance between two occurrences of the same letter in
$u(x_1,\dots,x_{n+2})$ is less than $n+1$.

Consider a super-word $w=(x_1\cdots x_{n+1})^\infty$. It is clear
that $w\neq 0$ and any series of changes $x_{n+1}\mapsto x_{n+2}$
will not yield zero, since the distance between two occurrences of
the same letter is still $\leq n$.

With the help of above changes it is possible to get $2^M$
different non--ze\-ro words. It follows that the number of
different non--ze\-ro words of length $k$ in the monomial algebra
$A$ is not less than $2^{[\frac{k}{n+1}]}$, and the algebra $A$
has exponential growth. Algebra $A_{n+2}$ is
fi\-ni\-te\-ly--presented and hence is an automata \cite{BBL}.
Hence $A_{n+2}^{\sim}$ contains a $2$-generated (and countably
generated) free subalgebra, by Theorem \ref{main}.

\medskip
\noindent{\bf Remark.} More complicated proof is needed for the fact
that $A_{n+1}^{\sim}$ also contains a free subalgebra for every
$n\geq 3$.
\medskip

The similar situation takes place in the case of groups. As a
natural group-theoretical analog of Theorem \ref{lie}, we have the
following:

\begin{theorem}\label{group}
For $n\geq 1$, let $G_{n+2}$ be a group with $n+2$ generators
$y_1,\dots, y_{n+2}$ and the following relations: for $k=3,\dots,
n$, any left-normalized commutator of length $k$ which consists of
fewer than $k$ different symbols from $\{y_1,\dots,y_{n+2}\}$ is
trivial. Then the group $G_{n+2}$ contains a $2$-generated free
subgroup.
\end{theorem}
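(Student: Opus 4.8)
The plan is to mimic the Lie-algebra argument, replacing the monomial associative algebra $A_{n+2}$ and its Lie subalgebra $A_{n+2}^{\sim}$ by a suitable group-theoretic surrogate. First I would introduce the ``group monomial'' analog: a group $H_{n+2}$ on generators $y_1,\dots,y_{n+2}$ in which a left-normalized commutator $[y_{i_1},\dots,y_{i_k}]$ is killed precisely when $k\le n$ and fewer than $k$ distinct symbols occur among $y_{i_1},\dots,y_{i_k}$. Exactly as in the associative case, $G_{n+2}$ surjects onto $H_{n+2}$ (the defining relations of $G_{n+2}$ are a subset of those of $H_{n+2}$, read in the group with the left-normalized bracketing for $k=3,\dots,n$, and one checks the missing brackets follow from Hall--Witt identities modulo the relations already imposed), so it suffices to produce a free subgroup inside $H_{n+2}$. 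The second step is to pass to the associated graded Lie ring: filter $H_{n+2}$ by its lower central series and form $\mathrm{gr}\,H_{n+2}=\bigoplus_i \gamma_i/\gamma_{i+1}$. The defining commutator relations of $H_{n+2}$ translate verbatim into the defining relations of $L_{n+2}$ (the ``distance between repeated letters is $<n+1$'' description), so there is a natural surjection $L_{n+2}\twoheadrightarrow \mathrm{gr}\,H_{n+2}$, and by the argument already given in the excerpt $\mathrm{gr}\,H_{n+2}$ is a quotient of $A_{n+2}^{\sim}$ and hence, by Theorem \ref{main}, contains a $2$-generated free Lie subring.

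The core step is then a \emph{lifting} argument: from a free Lie subring of $\mathrm{gr}\,H_{n+2}$ on two homogeneous generators $\bar{a},\bar{b}$, produce a free subgroup of $H_{n+2}$. Here I would use the standard Magnus-type / Shirshov-type criterion: choose elements $a,b\in H_{n+2}$ whose leading forms (with respect to the lower central filtration) are $\bar a,\bar b$; a word $w(a,b)$ that is nontrivial in the free group has a well-defined leading form in $\mathrm{gr}\,H_{n+2}$ obtained from the corresponding free Lie expression, and since the free Lie ring on $\bar a,\bar b$ injects into $\mathrm{gr}\,H_{n+2}$ that leading form is nonzero, whence $w(a,b)\neq 1$. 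The point where real work is required is guaranteeing this ``leading form'' is genuinely the image of the free Lie word and does not get annihilated — i.e. controlling the filtration jumps of $w(a,b)$. Concretely one needs that $H_{n+2}$ is residually nilpotent (so the filtration is separating) and that the Magnus embedding into the completed group ring behaves well; both can be arranged because $A_{n+2}$ is graded and $\mathrm{gr}\,H_{n+2}$ embeds in (the Lie part of) a graded associative algebra, so no ``hidden'' higher-degree relations collapse the expected leading terms.

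The main obstacle, as anticipated in the remark about $A_{n+1}^{\sim}$, is this control of the leading terms: a priori the leading form of $w(a,b)$ might be the free Lie word evaluated at $\bar a,\bar b$ \emph{or} something of strictly higher weight if cancellation occurs in low weight, and one must rule out the bad possibility uniformly in $w$. I expect the cleanest route is to pick $a,b$ as short specific words in the $y_i$ (for instance built from the periodic super-word $(x_1\cdots x_{n+1})^\infty$ and its $x_{n+1}\mapsto x_{n+2}$ mutations used in the excerpt) whose leading Lie forms are exactly the free generators furnished by Theorem \ref{main}, and then invoke the homogeneity of $A_{n+2}$: because all relations are monomial/homogeneous, the leading form of any group word in $a,b$ is computed purely inside the graded object, so it equals the free Lie word and is therefore nonzero. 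Combined with residual nilpotence of $H_{n+2}$ — which follows since $\mathrm{gr}\,H_{n+2}$ has no nilpotent elements obstructing separation, being a subquotient of a graded algebra of exponential growth — this shows $a,b$ generate a free subgroup of $H_{n+2}$, and pulling back along $G_{n+2}\twoheadrightarrow H_{n+2}$ gives the free subgroup of $G_{n+2}$.
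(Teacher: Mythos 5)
There is a genuine gap at the central step of your argument. You claim that $\mathrm{gr}\,H_{n+2}$ (the graded Lie ring of the lower central series) ``is a quotient of $A_{n+2}^{\sim}$ and hence, by Theorem \ref{main}, contains a $2$-generated free Lie subring.'' This is doubly flawed. First, no such quotient map is established: what the presentation gives you is only a surjection $L_{n+2}\twoheadrightarrow \mathrm{gr}\,H_{n+2}$ (initial forms of the relators die, but $\mathrm{gr}$ of a group given by commutator relations may satisfy many further relations), and there is no comparison map between $A_{n+2}^{\sim}$ and $\mathrm{gr}\,H_{n+2}$ in either direction. Second, even if such a surjection existed, free subalgebras do not pass to quotients; the paper's own reduction runs in the opposite direction (a surjection \emph{onto} a free object splits, so the \emph{preimage} contains a free object --- this is how $L_{n+2}$ inherits a free subalgebra from its quotient $A_{n+2}^{\sim}$, not the other way around). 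Your subsequent lifting step then needs an \emph{injection} of the free Lie ring into $\mathrm{gr}\,H_{n+2}$ on the chosen leading forms, together with residual nilpotence of $H_{n+2}$; neither is proved, and the argument you offer for residual nilpotence (``$\mathrm{gr}\,H_{n+2}$ has no nilpotent elements obstructing separation'') is not a valid inference --- whether $\bigcap_i\gamma_i(H_{n+2})=1$ cannot be read off from the associated graded ring at all.

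The paper sidesteps the lower central series of the abstract group entirely, and this is the missing idea: instead of analyzing $\mathrm{gr}\,G_{n+2}$, one realizes the relations inside the concrete algebra. Adjoin a unit to the monomial algebra $A_{n+2}$, set $\bar x_i=1+x_i$ (invertible since $x_i^2=0$), and let $A_{n+2}^{\#}$ be the group they generate. Because the offending monomials vanish in $A_{n+2}$, every left-normalized commutator of length $k\le n$ in the $\bar x_i$ with fewer than $k$ distinct letters is trivial, so $A_{n+2}^{\#}$ is a quotient of $G_{n+2}$. Inside $A_{n+2}^{\#}$ one takes the two group elements built from the same bracket arrangements as the free Lie generators $[u],[v]$ of $A_{n+2}^{\sim}$ supplied by Theorem \ref{main}; Lemma \ref{lemma13a} says $[u],[v]$ freely generate a free \emph{associative} subalgebra, Lemma \ref{lemma12} handles the higher-degree perturbations, and Lemma \ref{lemma13} (the Magnus-type statement: in a good homogeneous algebra, if $a,b$ freely generate a free associative subalgebra and $1+a,1+b$ are invertible, the group they generate is free) yields freeness of the subgroup. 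Pulling back along $G_{n+2}\twoheadrightarrow A_{n+2}^{\#}$ (a surjection onto a free group splits) finishes the proof. Your proposal gestures toward this Magnus-type mechanism, but as written the chain of deductions through $\mathrm{gr}\,H_{n+2}$ does not close, and the key technical input corresponding to Lemma \ref{lemma13} is assumed rather than supplied.
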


Note that the groups $G_{n+2}$ are related to the construction of
amenable groups with fast F{\o}lner function from \cite{G}.
\vspace{.5cm}

\thanks It is a pleasure for us to thank Mikhail Gromov
for posing the problem and his helpful comments and Louis Rowen
for useful discussions and suggestions. The authors were supported
by the Israel Science Foundation grant No. 1178/06. The research
of the second author was partially supported by Russian Science
Support Foundation.

\section{The periodicity}

The order $a_1\prec a_2\prec\dots\prec a_n$ induces a
lexicographical order on the set of all words. Two words are
incompatible with respect to this order, if one is initial in the
of other. By $|v|$ will be denoted the length of a word $v$. By
$u\subset v$ will be denoted the occurrence of a word $u$ in a
word $v$.
 A word $u$ is called {\it cyclic}, if for some $k>1$,
$u=v^k;$ otherwise it is called {\it noncyclic} or {\it
nonperiodic}.  If $W=u^k r$, where $r$ is an initial segment in
$u$, then $W$ is called {\it quasiperiodic of order $|u|$}. In
this case $W$ is a subword of $u^\infty$ (see next paragraph).
Words $u$ and $v$ are called {\it cyclically conjugate}, if, for
some words $c$ and $d$, $u=cd$ and $v=dc$. The {\it cyclic
conjugacy} relation is an equivalence relation.

A {\it superword} is a word which is infinite in both directions.
A word which is infinite to the left, is called a {\it left
superword}; a word which is infinite to the right, is called a
{\it right superword}. By $u^\infty$ will be denoted a superword
with the period $u$, by $u^{\infty/2}$ will be denoted a right
(left) superword, which begins (terminates) with the word $u$.

Since it will clear from the context, which superword is under
consideration, left or right, we do not introduce special
notation. The notation $u^{\infty/2}\cdot s\cdot v^{\infty/2}$
means, for example, that $u^{\infty/2}$ is a left superword and
$v^{\infty/2}$ is a right one.

Right superwords (unlike finite words, for which incomparable
elements exist) constitute a linear ordered set with respect to
the left lexicographic ordering; the same is true for left
superwords with respect to the right lexicographic ordering.

\subsection{Periodic superwords}
{\bf Subwords of $u^\infty$.}\ By $u$ will be denoted a
non-cyclical word. We recall some propositions from \cite{BBL}. By
$A_{u^\infty}$ we denote an algebra whose defining relations has
following form: $s=0$ where $s$ is a word which is not a subword
of $u^\infty$.

%%%Proposition 2.2.
\begin{prop}             %\label{Th2.2}
Each two subwords in $u^\infty$ of length $N|u|$ are cyclically
conjugate; they coincide only when the distance between their
first letters is divisible by the period.
\end{prop}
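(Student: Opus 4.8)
The plan is to describe every length-$N|u|$ subword of $u^\infty$ explicitly as a power of a cyclic conjugate of $u$, which at once yields the cyclic conjugacy of any two of them, and then to deduce the coincidence statement by turning an overlap into a statement about the periods of the superword $u^\infty$, where non-cyclicity of $u$ does the work.

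Concretely, I would index the letters of $f:=u^\infty$ by $\mathbb{Z}$, normalised so that $f(m)=f(m+|u|)$ for all $m$ and $u=f(0)f(1)\cdots f(|u|-1)$. A subword of length $N|u|$ is then $w=f(i)f(i+1)\cdots f(i+N|u|-1)$ for some $i\in\mathbb{Z}$; writing $u=u_1u_2$ with $|u_1|\equiv i\pmod{|u|}$, periodicity of $f$ gives $w=(u_2u_1)^N$, a power of the cyclic conjugate $u_2u_1$ of $u$. Hence two length-$N|u|$ subwords have the form $(u_2u_1)^N$ and $(u_2'u_1')^N$, where $u_2u_1$ and $u_2'u_1'$ are cyclically conjugate; the same cyclic rotation that sends $u_2u_1$ to $u_2'u_1'$ sends $(u_2u_1)^N$ to $(u_2'u_1')^N$, so the two subwords are cyclically conjugate, which is the first assertion.

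For the coincidence statement, suppose the subwords starting at positions $i$ and $j$ are equal. If $|u|$ divides $i-j$ they coincide trivially by periodicity of $f$, so I only need the converse. Put $k:=|i-j|>0$. Equality of the subwords means $f(m)=f(m+k)$ for every $m$ in an interval $I$ of length $N|u|\ge|u|$. Since $I$ meets every residue class modulo $|u|$ and $f$ has period $|u|$, this local identity propagates to all of $\mathbb{Z}$: given $m\in\mathbb{Z}$, pick $m_0\in I$ with $m_0\equiv m\pmod{|u|}$ and compute $f(m)=f(m_0)=f(m_0+k)=f(m+k)$. Thus $f$ admits the global periods $k$ and $|u|$, hence also the period $g:=\gcd(k,|u|)$ — one iterates $f(m)=f\bigl(m+(p-q)\bigr)$ along the Euclidean algorithm, which is the bi-infinite (and elementary) form of the Fine--Wilf theorem. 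Because $g\mid|u|$, the word $u=f(0)\cdots f(|u|-1)$ equals $\bigl(f(0)\cdots f(g-1)\bigr)^{|u|/g}$, and non-cyclicity of $u$ forces $|u|/g=1$; that is, $|u|$ divides $k=|i-j|$, as required.

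I expect the period-propagation step to carry the only real content: it is exactly what uses the hypothesis that the subwords be long enough to contain a full period, and it fails for shorter subwords (in $(aab)^\infty$ the letter $a$ occurs at two consecutive positions, whose distance $1$ is not divisible by the period $3$). The remaining ingredients — the normal form $w=(u_2u_1)^N$, the closure of the set of periods under $\gcd$, and the fact that a proper power is cyclic — are routine.
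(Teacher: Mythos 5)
Your proof is correct. Note that the paper does not prove this proposition at all --- it is recalled from \cite{BBL} as a known fact --- so there is no in-paper argument to compare against; your argument (normal form $(u_2u_1)^N$ for a length-$N|u|$ subword, propagation of the local period $k=|i-j|$ to a global period of the bi-infinite word, closure of periods under $\gcd$, and non-cyclicity of $u$ forcing $\gcd(k,|u|)=|u|$) is the standard elementary proof and is complete, correctly using the standing assumption that $u$ is non-cyclic, which is exactly where the statement would otherwise fail.
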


%%%Proposition 2.3.
\begin{prop}             \label{Th2.3}
a) The beginning subword of length $|u|-1$ uniquely defines the
word from $A_{u^\infty}$. If the initial subwords of length
$|u|-1$ in two subwords $v$ and $v'$ coincide ($v$ and $v'$ are
subwords in the superword $u^\infty$), then one of them is a
subword in another. If $|c|\ge|u|$ and $d_1$ and $d_2$ are
lexicographically comparable, then at least one of the words
$cd_1,cd_2$ is not a subword in $u^\infty$.

b) The positions of the occurrences of a word $v$ of length
$\ge|u|$ in $u^\infty$ differs by a period multiple.

c) If $|v|\ge |u|$ and $v^2\subset u^\infty$, then $v$ is
cyclically conjugate to a power of $u$. Therefore, nonnilpotent
words in $A_{u^\infty}$ are exactly those words, which are
cyclically conjugate to words of the form $u^k$.
\end{prop}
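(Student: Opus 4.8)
The plan is to derive all three parts from two ingredients: the preceding Proposition (two equal subwords of $u^\infty$ whose common length is a multiple of $|u|$ occur at positions whose difference is a multiple of $|u|$), and the Fine--Wilf periodicity theorem (a finite word admitting periods $a$ and $b$ and of length at least $a+b-\gcd(a,b)$ admits the period $\gcd(a,b)$). Write $p=|u|$. Since $u$ is non-cyclical, its $p$ cyclic shifts $u^\infty[i..i+p-1]$ with $0\le i<p$ are pairwise distinct, for a coincidence among them would display a cyclic shift of $u$, hence $u$ itself, as a proper power.

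For part (a) the key point I would establish first is that a subword $v$ of length $p-1$ can occur in $u^\infty$ only at positions lying in a single residue class modulo $p$. Suppose it occurred at positions $i$ and $i+d$ with $0<d<p$. Then $u^\infty$ restricted to the window $[\,i,\,i+p-2+d\,]$ admits the period $d$, since the copy of $v$ at $i$ matches the copy at $i+d$ letter by letter, and it also admits the global period $p$; this window has length $p-1+d\ge p+d-\gcd(p,d)$, so Fine--Wilf gives it the period $g=\gcd(p,d)$. Cutting out a sub-window of length $p$, which fits since $p-1+d\ge p$, produces a cyclic shift of $u$ equal to the $(p/g)$-th power of its length-$g$ prefix, contradicting non-cyclicity of $u$. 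Granted this, the length-$(p-1)$ beginning of a subword fixes its position modulo $p$, a residue class fixes the whole bi-infinite continuation, and the three assertions of (a) follow in turn: a subword is determined by its length together with its length-$(p-1)$ beginning; if $v$ and $v'$ share that beginning then the shorter of them is an initial segment of the longer, i.e.\ $v\subset v'$ or $v'\subset v$; and if $|c|\ge p$ with $c\subset u^\infty$ then $c$ has a unique right continuation $r$ inside $u^\infty$, so $cd\subset u^\infty$ precisely when $d$ is an initial segment of $r$, which cannot hold simultaneously for two lexicographically comparable words $d_1,d_2$ (and if $c\not\subset u^\infty$ the claim is trivial).

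Part (b) is then immediate: if $v\subset u^\infty$ has $|v|\ge p$ and occurs at positions $i<j$, its length-$p$ prefix occurs at both $i$ and $j$, so the preceding Proposition (case $N=1$) gives $p\mid j-i$. For part (c), if $|v|\ge p$ and $v^2\subset u^\infty$ then $v$ occurs at some position $i$ and also at $i+|v|$, whence $p\mid |v|$ by (b), say $|v|=mp$; reading $u^\infty$ from position $i$ and writing $u=cd$ with $|c|\equiv i\pmod p$ exhibits $v=d\,(cd)^{m-1}c=(dc)^m$, which is the cyclic shift of $u^m=(cd)^m$ by $|c|$ letters, so $v$ is cyclically conjugate to $u^m$. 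Finally, $w$ is nonnilpotent in $A_{u^\infty}$ exactly when $w^N\subset u^\infty$ for every $N$; if $w=dc$ with $u^k=cd$, then $w^N=d\,(cd)^{N-1}c$ is visibly a subword of $u^\infty=(cd)^\infty$, so every such $w$ is nonnilpotent, while conversely, given nonnilpotent $w$, I would pick $N$ with $|w^N|\ge p$, apply the first half of (c) to $w^N$ (using $(w^N)^2\subset u^\infty$) to obtain $w^N=z^m$ with $z$ a cyclic shift of $u$, hence primitive; uniqueness of the primitive root of $w^N$ then forces $w=z^s$, and rotating $z^s$ into $u^s$ shows $w$ is cyclically conjugate to a power of $u$.

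The step I expect to be the main obstacle is part (a): parts (b) and (c) reduce almost mechanically to the preceding Proposition, whereas (a) genuinely needs the Fine--Wilf input, and one has to make sure it is used at exactly the right strength --- a beginning of length $p-2$ would not determine the position modulo $p$ --- and that the bookkeeping connecting ``position modulo $p$'', ``unique right continuation'', and ``initial segment'' is kept clean throughout.
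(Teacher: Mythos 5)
Your proposal is correct, but there is nothing in the paper to compare it against: Proposition \ref{Th2.3} is explicitly \emph{recalled} from \cite{BBL} and the paper gives no proof of it, so your argument should be judged on its own. It holds up. The central claim --- that a factor of length $|u|-1$ of $u^\infty$ occurs only at positions in a single residue class modulo $|u|$ --- is established correctly: after normalizing the two occurrences (using the $p$-periodicity of $u^\infty$) to distance $0<d<p$, the window of length $p-1+d$ does carry both periods $d$ and $p$, the Fine--Wilf bound $p+d-\gcd(p,d)\le p-1+d$ is met, and a length-$p$ sub-window then exhibits a cyclic conjugate of $u$ as a proper power, contradicting primitivity; parts (a), (b), (c) then follow as you describe, with (b) alternatively read off from the paper's Proposition 1 with $N=1$. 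Two small remarks. First, the converse half of (c) quietly uses uniqueness of the primitive root of $w^N$ (to pass from $w^N=z^m$, $z$ primitive, to $w=z^s$); this is a standard consequence of the same periodicity lemma, but you should state it as an ingredient rather than leave it implicit, and note that you are using the \emph{proof} of the first half of (c) (which yields $v=(dc)^m$ with $u=cd$), not just its statement. Second, your closing remark that a beginning of length $p-2$ ``would not determine the position modulo $p$'' is only a worst-case tightness claim (for $u$ with all letters distinct, length-one beginnings already determine the residue); this does not affect the proof. Methodologically your Fine--Wilf argument is exactly in the spirit of the paper's own toolkit: the ``Lemma on overlapping'' (Lemma \ref{Lh2.15}) is the same periodicity principle, so your route is the natural reconstruction of the argument the paper delegates to \cite{BBL}.
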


%%%Proposition 2.7.
\begin{prop}             \label{Th2.7}
If $uW=Wr$, then $uW$ is a subword of $u^\infty$ and $W=u^n r$,
where $r$ is an initial segment in $u$.
\end{prop}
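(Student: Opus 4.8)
The plan is to prove the statement by induction on $|W|$, after the preliminary remark that comparing lengths in $uW=Wr$ forces $|r|=|u|$, and that the conclusion ``$W=u^nr$'' is to be read with $r$ denoting not literally the word in the hypothesis but an initial segment of $u$ (equivalently, the relevant cyclic conjugate of $u$). The two readings agree once the argument is done: substituting $W=u^nr_0$ with $r_0$ a prefix of $u$ back into $uW=Wr$ and cancelling $u^n$ gives $ur_0=r_0r$, hence $r=tr_0$ where $u=r_0t$.

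For the base case $|W|<|u|$, reading the first $|W|$ letters of each side of $uW=Wr$ identifies $W$ with the length-$|W|$ prefix of $u$; then $uW$ is a prefix of $uu$, hence a subword of $u^\infty$, and $W$ itself is an initial segment of $u$, which is the required form with $n=0$. For the inductive step $|W|\ge|u|$: the word $uW=Wr$ begins with $u$ (from the left decomposition) while its first $|W|$ letters are $W$ (from the right decomposition), so since $|W|\ge|u|$, comparing prefixes shows $u$ is an initial segment of $W$; write $W=uW'$ with $|W'|<|W|$. Substituting and cancelling the leading $u$ yields $uW'=W'r$, to which the induction hypothesis applies: $uW'$ is a subword of $u^\infty$ and $W'=u^{m}r_0$ with $r_0$ a prefix of $u$. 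Hence $W=u^{m+1}r_0$ and $uW=u^{m+2}r_0$, which is a prefix of $u^{m+3}$ and therefore a subword of $u^\infty$; this is the claim with $n=m+1$.

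A cleaner alternative, avoiding induction, is to set $V=uW=Wr$ and observe that $V$ has period $|u|$: for $1\le i\le|W|$, the $i$-th letter of $W$ read off from $V=Wr$ equals the $i$-th letter of $W$ read off from $V=uW$, i.e.\ $V[i]=V[i+|u|]$. Consequently $V$ is the length-$|V|$ prefix of $u^\infty$ (giving that $uW$ is a subword of $u^\infty$), and $W$, being the suffix of $V$ beginning at position $|u|+1\equiv1\pmod{|u|}$, is a prefix of $u^\infty$, i.e.\ of the form $u^nr_0$ with $r_0$ an initial segment of $u$. I do not anticipate a genuine obstacle here: the only points requiring attention are the length bookkeeping ($|u|=|r|$), the correct reading of $r$ in the conclusion as a cyclic conjugate of $u$, and the fact that noncyclicity of $u$ is not actually used in this particular statement (it enters elsewhere, e.g.\ in uniqueness assertions such as Proposition~\ref{Th2.3}).
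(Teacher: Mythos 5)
Your proof is correct. Note that the paper does not actually prove Proposition~\ref{Th2.7}: it is one of the facts recalled from \cite{BBL} at the start of the section, so there is no in-text argument to compare against; your induction on $|W|$, and even more directly your periodicity argument (the letter-by-letter observation that $V=uW=Wr$ satisfies $V[i]=V[i+|u|]$, hence is a prefix of $u^\infty$, and $W$ is the suffix starting at position $|u|+1$), are exactly the standard ways this Lyndon--Sch\"utzenberger-type fact is established. Your preliminary remark is also the right reading of the statement: since $|r|=|u|$, the $r$ in the conclusion cannot literally be the $r$ of the hypothesis (which is the cyclic conjugate $qp$ when $u=pq$ and $W=u^np$); the intended conclusion is that $W$ is quasiperiodic of order $|u|$ in the sense defined earlier in the section, i.e.\ $W=u^nr_0$ with $r_0$ an initial segment of $u$, and your back-substitution $ur_0=r_0r$ correctly identifies the hypothesis' $r$ as the corresponding cyclic conjugate. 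The observation that noncyclicity of $u$ is not needed here is likewise accurate.
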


\noindent{\bf Remark.} The periodicity of an infinite word means its
invariance with respect to a shift. In the one-sided infinite case a
pre-period appears; in the finite case there appear effects related
to the truncation. This, together with superword technique is the
essence of a great many combinatorial arguments (see Proposition
\ref{Th2.7}) especially Bernside type problems. Proofs of the
Shestakov hypothesis (nilpotency of subalgebra of $n\times n$ matrix
algebra with all words of length $\le n$ are nilpotent), of the
Shirshov height theorem (the normal basis of associative affine
$PI$-algebra $A$ contains only piece-wise periodic words, number of
periodic parts is less then $h(A)$, and length of each period is
$\le n$~-- maximal dimension of matrix algebra satisfying all
identities of $A$) of the coincidence theorem of the nilradical and
the Jacobson radical in a monomial algebra, are examples \cite{BBL},
\cite{AmitsurSmall0}, \cite{Belov10}.
\medskip

%%%Lemma 2.15
\begin{lemma}[on overlapping]          \label{Lh2.15}
If a subword of length $m+n-1$ occurs simultaneously in two periodic
words of periods $m$ and $n$, then they are the same, up to a shift.
\end{lemma}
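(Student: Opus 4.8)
The plan is to recognize this as a Fine--Wilf-type statement and to reduce it to a pure periodicity assertion about the common subword. First I would assume without loss of generality that $m\le n$ and name the common subword $v=v_1v_2\cdots v_{m+n-1}$. Since $v$ sits inside a superword of period $m$ we have $v_i=v_{i+m}$ whenever $1\le i$ and $i+m\le m+n-1$, and since it also sits inside a superword of period $n$ we have $v_i=v_{i+n}$ whenever $1\le i$ and $i+n\le m+n-1$; thus $v$ is a word of length $m+n-1$ admitting both $m$ and $n$ as periods, and crucially $m+n-1\ge m+n-\gcd(m,n)$. I would then reduce everything to the claim that $v$ admits $d:=\gcd(m,n)$ as a period: granting it, the length-$m$ prefix $u$ of $v$ is precisely the period word of the first superword and it itself has period $d$, so the first superword equals $p^{\infty}$ with $p:=v_1\cdots v_d$; the same observation applied to the length-$n$ prefix of $v$ forces the second superword to equal $p^{\infty}$ as well. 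Hence the two superwords coincide, in particular one is a shift of the other.

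To establish the remaining claim, which is just the classical Fine--Wilf lemma, I would induct on $m+n$ (equivalently, run the Euclidean algorithm on the pair of periods). If $m=n$ the word already has period $m=d$. If $m<n$, I would pass to $v':=v_1\cdots v_{n-1}$: it keeps $m$ as a period, being a subword of $v$, and it acquires $n-m$ as a period because for $1\le i\le m-1$ one combines $v_i=v_{i+n}$ with $v_{i+n}=v_{i+n-m}$ to get $v_i=v_{i+n-m}$. Since $|v'|=n-1\ge m+(n-m)-\gcd(m,n-m)$ (as $\gcd(m,n-m)=d\ge 1$), the inductive hypothesis applies to $(v',m,n-m)$ and yields that $v'$ has period $d$. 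Finally, because $|v'|=n-1\ge m$ the word $v'$ already carries a full block of length $m$ with period $d$, and the relation $v_i=v_{i-m}$ for $m<i\le m+n-1$ (legitimate since $m+n-1\ge 2m$) propagates period $d$ from $v'$ to all of $v$.

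I expect the only real work to be the index bookkeeping together with the degenerate cases ($m=n$, or periods that are themselves imprimitive), and the single delicate point is confirming that the length $m+n-1$ is exactly enough to keep the inductive hypothesis $|\cdot|\ge p+q-\gcd(p,q)$ alive at each step, which it is since $m+n-1\ge m+n-\gcd(m,n)$ always holds. As an alternative route that avoids re-proving Fine--Wilf, one may instead lean on the propositions already recorded: the $m$-periodicity lets one write $v=uW$ with $|u|=m$, the $n$-periodicity lets one write $v=u'W'$ with $|u'|=n$, comparing the two factorizations shows that $u'$ is the length-$n$ prefix of $u^{\infty}$, and then Proposition \ref{Th2.7} applied to the overlap relation this produces identifies ${u'}^{\infty}$ with $u^{\infty}$ up to a shift.
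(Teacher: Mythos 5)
Your argument is correct, but note that the paper itself does not prove Lemma \ref{Lh2.15} at all: it is recalled as a known fact from the monomial-algebra literature (\cite{BBL}), so there is no in-paper proof to match. What you supply is the classical Fine--Wilf periodicity argument, and it stands on its own: the common factor $v$ of length $m+n-1$ has periods $m$ and $n$, and since $m+n-1\ge m+n-\gcd(m,n)$ (with equality exactly when $\gcd(m,n)=1$, so the strong form of Fine--Wilf is genuinely needed and your length is precisely at the sharp threshold), $v$ acquires period $d=\gcd(m,n)$; then each ambient periodic superword, being the $m$- (resp.\ $n$-) periodic extension of a window of $v$ of length $\ge m$ (resp.\ $\ge n$) that is a power of the length-$d$ prefix $p$, equals $p^\infty$ up to a shift, which is the assertion of the lemma. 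Your Euclidean induction also checks out: truncating to $v'=v_1\cdots v_{n-1}$ keeps the invariant ``length $=$ sum of periods $-1$'' since $n-1=m+(n-m)-1$, the index ranges for deriving the period $n-m$ are right, and the back-propagation works because the length-$(n-1)$ suffix of $v$ is a copy of $v'$ shifted by the multiple $m$ of $d$, so $v=p^{m/d}v'$ is a prefix of $p^\infty$. The only weak spot is the closing ``alternative route'' via Proposition \ref{Th2.7}: as sketched it only re-encodes the $m$-periodicity of $v$ (the relation $uV=Vs$ you would feed into Proposition \ref{Th2.7} is just that $v$ is a prefix of $u^{\infty/2}$) and never genuinely brings the $n$-periodicity into play, so it would not replace the main argument; but since it is offered only as an aside, the proof as a whole is complete.
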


Lemma \ref{Lh2.15} implies one technical statement, needed in sequel

\begin{lemma}  \label{LePerinR}
Let $r=u^nv^m$, $n>k, m>l$. Then $r$ has not common subwords with
$u^\infty$ of length $\ge n|u|+|u|+|v|-1=(n+1)|u|+|v|-1$ and has not
common subwords with $v^\infty$ of length $\ge (m+1)|v|+|u|-1$.
\end{lemma}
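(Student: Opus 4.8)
The plan is to reduce everything to the overlapping lemma (Lemma \ref{Lh2.15}) and a careful accounting of where a long common subword would have to sit inside $r=u^nv^m$. I will argue the first assertion; the second is symmetric under swapping the roles of $u,v$ and reading $r$ from the right.

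First I would suppose, for contradiction, that $w$ is a common subword of $r$ and of $u^\infty$ with $|w|\ge (n+1)|u|+|v|-1$. Locate the occurrence $w\subset r$. Since $|r|=n|u|+m|v|$, the total length of $r$ is enough to contain $w$, but the key point is how $w$ straddles the ``seam'' between the $u^n$-block and the $v^m$-block. Write the occurrence of $w$ in $r$ as splitting into a (possibly empty) suffix $w_1$ lying inside the $u^n$ part and a (possibly empty) prefix $w_2$ lying inside the $v^m$ part, so $w=w_1w_2$ and $|w_1|+|w_2|=|w|\ge(n+1)|u|+|v|-1$. Because the $u^n$-block has length only $n|u|$, we get $|w_1|\le n|u|$, hence $|w_2|\ge |u|+|v|-1$, so $w_2$ is a genuinely long subword of $v^m$ — in fact of length $\ge |u|+|v|-1$.

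Next I would exploit that $w$ is simultaneously a subword of $u^\infty$: then its suffix $w_2$ is also a subword of $u^\infty$, i.e. $w_2$ occurs both in $v^\infty$ (as a subword of $v^m$, using $m>l\ge 1$ so that $v^m$ genuinely contains periodic windows of period $|v|$) and in $u^\infty$. Since $|w_2|\ge |u|+|v|-1$, Lemma \ref{Lh2.15} forces the periodic words $u^\infty$ and $v^\infty$ to coincide up to a shift; in particular $u$ and $v$ are cyclically conjugate powers — concretely $u$ and $v$ are cyclically conjugate (as $u$ is noncyclic). But then $r=u^nv^m$ is itself quasiperiodic of period $|u|$ with no seam at all, and $r$ would share with $u^\infty$ a subword of length $|r|-(|u|-1)$, which is consistent only if — and here I would check the numerics — $|w|$ cannot actually reach the claimed bound, or else derive a direct contradiction with $w_1$ and $w_2$ being ``aligned'' differently across the seam. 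The cleanest route is: if $u\sim v$ cyclically, replace $v^m$ by the corresponding rotation of $u^m$; the mismatch in alignment between the two blocks (unless the rotation is trivial, i.e. $v=u$, excluded since then $r=u^{n+m}$ and the bound is immediate) means the longest run of period $|u|$ across the seam has length $<|u|+|u|$ on each side plus the overlap, yielding $|w|<(n+1)|u|+|v|-1$ after summing the contributions $|w_1|<n|u|+|u|$ and $|w_2|<|u|+|v|$ — contradiction.

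The main obstacle, I expect, is the bookkeeping in this last step: getting the constant exactly right rather than off by $|u|$. The safe way is to treat two cases. If $w$ lies entirely within the $u^n$-block, then $|w|\le n|u|<(n+1)|u|+|v|-1$, done. Otherwise $w$ crosses the seam, and I apply the above: $w_2\subset v^m$ has length $\ge |u|+|v|-1$, Lemma \ref{Lh2.15} applies to $w_2$ (period $|v|$ from $v^\infty$, period $|u|$ from $u^\infty$), forcing $u^\infty$ and $v^\infty$ equal up to shift; then by Proposition \ref{Th2.3}(c), $v$ is cyclically conjugate to a power of $u$, and since both are viewed as noncyclic periods this means $|u|=|v|$ and $v$ is a rotation of $u$. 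Finally a one-line computation with the seam alignment of two rotations of the same noncyclic word $u$ shows the common period-$|u|$ window across the seam has length strictly less than $2|u|$, so $|w|\le |w_1|+|w_2|< (n|u|)+(2|u|)\le (n+1)|u|+|v|$, and since lengths are integers and $|v|=|u|$ this is $\le (n+1)|u|+|v|-1$ only after tightening — so one squeezes out the contradiction with the hypothesized strict lower bound $|w|\ge (n+1)|u|+|v|-1$ by noting the window across the seam is in fact $\le |u|+|u|-1$ on the relevant side. I would write this numeric step out in full in the final version, since it is exactly where the stated constant is pinned down.
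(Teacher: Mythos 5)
Your first half is exactly the intended deduction (the paper itself offers nothing beyond ``Lemma \ref{Lh2.15} implies'' this statement): split the occurrence of $w$ at the seam of $r=u^nv^m$, note that $|w_1|\le n|u|$ forces $|w_2|\ge |u|+|v|-1$, observe that $w_2$ is a factor of both $u^\infty$ and $v^\infty$, and apply the overlapping lemma to conclude that $u^\infty$ and $v^\infty$ coincide up to a shift, i.e.\ (both words being noncyclic) $|u|=|v|$ and $v$ is a cyclic conjugate of $u$. At that point the proof should simply stop: this contradicts the standing hypothesis of the section that $u$ and $v$ are distinct, non-conjugate words (in all later uses they are distinct regular, hence non-conjugate, words), and that hypothesis, although not restated in the lemma, is genuinely needed.

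The gap is your attempt to finish without that hypothesis, via the ``seam alignment'' computation claiming $|w_2|<2|u|$. This cannot work. First, your case split ``either $w$ lies in the $u^n$-block or $w$ crosses the seam'' omits the case where $w$ lies entirely inside the $v^m$-block; there the seam imposes no constraint on $w=w_2$ at all, because once $v$ is a rotation of $u$ the whole block $v^m$ is itself a factor of $u^\infty$. Second, and decisively, no contradiction exists in the conjugate case because the asserted bound is then false: take $u=ab$, $v=ba$, $m\ge n+2$; then $v^m$ is a common subword of $r=u^nv^m$ and of $u^\infty$ of length $2m\ge (n+1)|u|+|v|$. (For the same reason your parenthetical remark that $v=u$ is ``excluded since then the bound is immediate'' is backwards: if $v=u$ the bound fails outright, and that case is excluded only by the hypothesis $u\ne v$.) Finally, even granting your claimed estimate, you only reach $|w|<(n+1)|u|+|v|$, which does not contradict $|w|\ge (n+1)|u|+|v|-1$; the off-by-one you acknowledge leaving open is not bookkeeping but the symptom that this route is blocked. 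Read the lemma with $u,v$ noncyclic and not cyclically conjugate; then your first half already completes the proof, and nothing about the seam needs to be computed.
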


\subsection{Periods switching}

\begin{prop}\label{pr}
Let $u,v$ are not powers of the same word, $l|v|>2|u|$ and
$s|u|>2|v|$. Then $v^lu^s$ is not a subword of $u^\infty$ or
$v^\infty$.
\end{prop}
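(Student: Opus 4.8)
The plan is to argue by contradiction: suppose $v^l u^s$ is a subword of $u^\infty$ (the case of $v^\infty$ being symmetric in the roles of $u$ and $v$). The guiding intuition is that the long $u$-power tail $u^s$ forces the ambient period to be $|u|$, while the long $v$-power prefix $v^l$ forces it to be $|v|$, and the overlap between these two demands is large enough to invoke Lemma \ref{Lh2.15} (on overlapping), which would then identify $u$ and $v$ as cyclic conjugates of a power of a common word — contradicting the hypothesis that $u$ and $v$ are not powers of the same word.

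More precisely, first I would set up the containment $v^l u^s \subset u^\infty$ and examine the prefix $v^l$. Since $v^l u^s$ sits inside $u^\infty$, in particular the initial block of $v^l u^s$ of length $l|v|$ is a subword of $u^\infty$, so it is $|u|$-periodic. I want to conclude that this forces $v$ to be cyclically conjugate to a power of $u$: here I would apply Proposition \ref{Th2.3}(c), which says that if $|v| \ge |u|$ and $v^2 \subset u^\infty$ then $v$ is cyclically conjugate to a power of $u$. The condition $l|v| > 2|u|$ together with $l \ge 1$ does not immediately give $|v| \ge |u|$, so I would split into the case $|v| \ge |u|$ (where Proposition \ref{Th2.3}(c) applies directly once we note $v^2 \subset v^l \subset u^\infty$, using $l \ge 2$, which follows from $l|v| > 2|u| \ge 2|v|$ being false — so I must be careful here) and the complementary case $|v| < |u|$, where instead I observe that a window of length $l|v| > 2|u| > |u| + |v| - 1$ is simultaneously $|u|$-periodic and $|v|$-periodic, so Lemma \ref{Lh2.15} applies directly: the two periodic words agree up to a shift, which is impossible unless $u$ and $v$ are powers of a common word (by the standard Fine–Wilf type consequence of commuting periods, or by noting the minimal period must divide both $|u|$ and $|v|$).

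The cleanest uniform route, which I would actually write out, avoids the case split: observe that $v^l u^s$ contains a subword of length $\ge l|v| > 2|u| > |u| + |v| - 1$ that is $|v|$-periodic (coming from $v^l$, provided $l|v| \ge$ that bound, which holds), and as a subword of $u^\infty$ it is also $|u|$-periodic; then Lemma \ref{Lh2.15} forces $u$-period and $v$-period to coincide up to a shift. A word that is simultaneously purely periodic with periods of lengths $|u|$ and $|v|$ over a window of length at least $|u| + |v|$ has minimal period dividing $\gcd(|u|,|v|)$, whence both $u$ and $v$ are powers of a single word — the promised contradiction. The same argument, run with $s|u| > 2|v|$ in place of $l|v| > 2|u|$, handles the hypothesis $v^l u^s \subset v^\infty$.

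The main obstacle I anticipate is purely bookkeeping: making sure the length estimates genuinely deliver an overlap window of length $\ge |u| + |v| - 1$ inside a \emph{single} periodic stretch. The subtlety is that $v^l u^s$ as a whole need not be periodic with a small period — only its prefix $v^l$ and its suffix $u^s$ are — so I must extract the overlapping window from a location where $v^l$ overlaps sufficiently with the $u^\infty$-periodicity, namely within the prefix of length $l|v|$ itself (which already lies inside $u^\infty$ and is thus $|u|$-periodic there). Once the window is correctly located, the inequality $l|v| > 2|u| \ge |u| + |v|$ (using $|u| \ge |v|$; if instead $|v| > |u|$ then $l|v| > 2|u|$ combined with $l \ge 1$ still needs a moment's thought, and one falls back on $v^2 \subset u^\infty$ via Proposition \ref{Th2.3}(c)) gives room to spare, and the rest is immediate from Lemma \ref{Lh2.15}.
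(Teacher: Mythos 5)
Your overall route coincides with the paper's: the paper likewise deduces the proposition from the overlapping Lemma~\ref{Lh2.15} (in the form: two periodic superwords of periods $m$ and $n$ sharing a common part of length $>m+n-2$ coincide), the long common part being supplied by the block $v^l$ (resp.\ $u^s$) sitting inside $u^\infty$ (resp.\ $v^\infty$); you are in fact more careful with the length bookkeeping than the paper, which does not check that the overlap is long enough. But the loose end you flag --- $|v|>|u|$ with possibly $l=1$ --- is not a bookkeeping nuisance: it cannot be closed, because the statement as literally written fails there. Take $u=ab$, $v=babab$, $l=1$, $s=6$: then $u,v$ are not powers of the same word, $l|v|=5>4=2|u|$, $s|u|=12>10=2|v|$, and yet $vu^6=b(ab)^8$ is a subword of $u^\infty$. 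So the proposition must be read with $l,s\ge 2$ (as it is in all of its later applications, where the exponents are large), and the paper's own two-line argument needs the same repair. Once $l\ge 2$ is granted, your ``uniform route'' needs no case split at all: $l|v|\ge\max(2|v|,\,2|u|+1)\ge |u|+|v|$, so the prefix $v^l$ itself is a window with periods $|u|$ and $|v|$ of the required length, and your fallback via $v^2\subset u^\infty$ and Proposition~\ref{Th2.3}(c) becomes unnecessary.

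A second point, shared with the paper but worth fixing in your write-up: the inference ``the window has period dividing $\gcd(|u|,|v|)$, hence $u$ and $v$ are powers of a single word'' is too quick. Agreement of $u^\infty$ and $v^\infty$ up to a shift only shows that $u$ and $v$ are powers of \emph{cyclic conjugates} of one primitive word --- e.g.\ $u=ab$ and $v=ba$ share arbitrarily long common factors without being powers of a common word, so the hypothesis ``not powers of the same word'' is not yet contradicted. To finish, use the phase information you have not yet exploited: the occurrence of $u^s$ following $v^l$ is, by Proposition~\ref{Th2.3}(b), aligned with the period of $u^\infty$, so $v^l$ ends exactly at a period boundary; since $l|v|\ge|u|$ and $v^l$ is $\gcd(|u|,|v|)$-periodic, its last $|u|$ letters are exactly $u$ and its last $|v|$ letters are exactly $v$, so both are powers of the same suffix of length $\gcd(|u|,|v|)$ --- the desired contradiction. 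With these two repairs ($l,s\ge2$ and the alignment step) your argument is complete and is essentially the intended one.
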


This proposition follows from the following assertion which is
follows at once from Lemma 1 (see \cite{BBL}): if two periodical
superwords of periods $m$ and $n$ have the common part of length
$>m+n-2$, then these words are identical. In this case, $u^sv^l$
is a subword $v^\infty$ and $u^\infty$.

\begin{prop} \label{pra}
Let $u,v$ are not powers of the same word, $l|v|>2|u|$ and
$s|u|>2|v|$. Then $v^lu^s$ is not a proper power.
\end{prop}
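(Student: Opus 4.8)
The plan is to argue by contradiction: suppose $r:=v^lu^s=w^k$ with $k\ge 2$. I would begin with two harmless reductions. Replacing $u$ and $v$ by their primitive roots (and rescaling the exponents $l,s$ accordingly) preserves all hypotheses, namely $l|v|>2|u|$, $s|u|>2|v|$, and the fact that $u,v$ are not powers of a common word; and replacing $w$ by its primitive root only increases $k$. So I may assume $u$, $v$ and $w$ are all nonperiodic. The structural remark I will use repeatedly is that $r=w^k$ forces the superword $r^\infty$ to coincide with $w^\infty$, so $r^\infty$ admits the period $d:=|w|<|r|$, and I will exploit invariance of $r^\infty$ under the shift by $d$.

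In the main case I superpose the block $v^l$ (sitting at the start of a period of $r^\infty$, $|v|$-periodic, of length $l|v|>2|u|$) with the adjacent block $u^s$ ($|u|$-periodic, of length $s|u|>2|v|$). Shifting the first block by the largest multiple $jd$ that keeps the translate inside the span of the two blocks produces, inside $r^\infty$, a window which is simultaneously $|v|$-periodic and $|u|$-periodic, of length roughly $\min(l|v|,s|u|)$ minus $d$. Whenever this length is $\ge|u|+|v|-1$, Lemma \ref{Lh2.15} (Fine--Wilf) forces $u^\infty$ and $v^\infty$ to coincide up to a shift, i.e. $u$ and $v$ to be powers of a common word, contradicting the hypothesis.

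It remains to treat the configurations in which $d$ is too large for such a window to exist --- roughly, $d$ comparable to both $l|v|$ and $s|u|$, which together with $d\le|r|/2$ already pins $k$ down to a small value. Here I upgrade the periodicity of a single block: if $v^l$ (being a prefix of $w^k$) is long enough to be simultaneously $|v|$-periodic and $d$-periodic beyond the Fine--Wilf threshold, then $|v|\mid d$, whence $r^\infty=v^\infty$, and $v^lu^s\subset r^\infty=v^\infty$ contradicts Proposition \ref{pr}; symmetrically with $u$. The residual extreme cases $l=1$ (which forces $|v|>2|u|$) and $s=1$ (which forces $|u|>2|v|$ and so cannot occur together with $l=1$) are handled in the same spirit: when $l=1$ and $k=2$ one checks that the second half of $w^2=vu^s$ lies entirely inside the block $u^s$, so $w$ is a factor of $u^\infty$, hence $r^\infty=u^\infty$, again contradicting Proposition \ref{pr}.

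The hard part is precisely this final bookkeeping: verifying that the inequalities $l|v|>2|u|$, $s|u|>2|v|$ and $d\le|r|/2$ always leave enough room either to manufacture a bi-periodic window of length $\ge|u|+|v|-1$ or to upgrade one block's periodicity to the global period $d$, the thresholds in Lemma \ref{Lh2.15} being exactly the constraint one must respect. The small values $k=2$ and $l,s\in\{1,2\}$ are where the two hypotheses on the exponents get used most sharply, and checking that no such configuration escapes the argument is the bulk of the work.
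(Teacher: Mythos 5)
Your proposal is a strategy outline rather than a proof, and the part you leave as ``bookkeeping'' is exactly where the content of the statement lies. The configurations in which your bi-periodic window has no room --- $d=|w|$ comparable to $l|v|$ and $s|u|$, i.e.\ the small exponents $k=2,3$ --- are precisely the cases the paper's proof actually works out; the paper does so by assuming WLOG $|v^l|\ge|u^s|$ and splitting on $k$: for $k\ge 4$ the prefix $v^l$ covers $z^2$ and is compared with $z^\infty$; for $k=2$ one observes $u^s\subset z\subset v^l$; for $k=3$ the inequality $s|u|>2|v|$ forces $l\ge 3$ and again $v^\infty=z^\infty$. You acknowledge that checking no configuration escapes your two mechanisms is ``the bulk of the work'' and do not carry it out, so the proposal cannot be accepted as a proof of Proposition \ref{pra}.

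There is also a concrete logical slip in your main case. From a window that is simultaneously $|u|$- and $|v|$-periodic of length $\ge|u|+|v|-1$, Lemma \ref{Lh2.15} yields only that $u^\infty$ and $v^\infty$ coincide \emph{up to a shift}; that means $u$ and $v$ are powers of cyclically conjugate words, not powers of a common word (think of $u=ab$, $v=ba$), so it does not yet contradict the hypothesis. To close this you must pin down the relative phase, as the paper does elsewhere (Proposition \ref{Pruinftyvinfty}, using Proposition \ref{Th2.3}(b) and Proposition \ref{Th2.7}); note that the paper's proof of Proposition \ref{pra} sidesteps the issue entirely by always comparing a block against $z$ with a common anchor (the block is a prefix or suffix of $z^k$), where ``same up to shift'' becomes genuine equality of the anchored periodic words and primitivity gives $v=z$ or $u=z$ outright. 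The same defect appears in the one residual sub-case you do sketch: from ``the second half of $w^2=vu^s$ lies inside $u^s$'' you conclude ``$w$ is a factor of $u^\infty$, hence $r^\infty=u^\infty$,'' but a factor of $u^\infty$ of length $|w|$ need not satisfy $w^\infty=u^\infty$ without an additional divisibility/alignment argument. So the overall plan is in the right toolkit (Lemma \ref{Lh2.15} plus Proposition \ref{pr}), but both the phase issue and the small-$k$ analysis are genuine gaps.
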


\begin{proof}
Without loss of generality we may suppose that both $u$ and $v$ are
non-cyclic. Suppose that $k>1$ and $z^k=v^lu^s$ for some non-cyclic
word $z$. Without loss of generality we can assume that $|v^l|\ge
|u^s|$.

If $k\ge 4$, then $v^l$ contains $z^2$ and from the other hand,
$v^l$ is subword of $z^\infty$. It follows from overlapping lemma
\ref{Lh2.15} that $v$ is power of $z$ and hence $v=z$ (both $z, v$
are non cyclic). Then $u$ is also power of $z$ and we are done.

If $k=2$ then $u^s$ is a subword of $z$ and hence of $v^l$. That
contradicts overlapping lemma \ref{Lh2.15}.

If $k=3$ then because $|v^l|\ge |u^s|$ and $s|u|>2|v|$ we have $l\ge
3$. In this case $|v|<|z|/2$ and $|v^l|\ge |v|+|z|$. By overlapping
lemma \ref{Lh2.15} we have that $v^\infty=z^\infty$ and hence $v=z$
because both are non cyclic. Then $z^3=v^lu^s=z^lu^s$ and
$u^{s}=z^{3-l}$. Because $u$ is noncyclic $u=z$. Hence $u=v$ that
contradicts conditions of the proposition \ref{pra}.
\end{proof}

\begin{lemma}  \label{LePerinRa}
Let $r=u^nv^m$, $n>k, m>l$. Then
 $r$ is not a subword of $W'=v^{\infty/2}u^{\infty/2}$ and hence of
$v^pu^q$ for all $p, q$.
\end{lemma}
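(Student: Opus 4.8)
The plan is to argue by contradiction. Suppose $r=u^nv^m$ occurs as a subword of $W'=v^{\infty/2}u^{\infty/2}$, and look at where this occurrence lies relative to the single point of $W'$ at which its $v$-periodic left tail $v^{\infty/2}$ is joined to its $u$-periodic right tail $u^{\infty/2}$. That junction is a cut between two consecutive letters, whereas the occurrence of $r$ is an interval of letters, so exactly three possibilities arise: (a) the occurrence lies entirely in the left tail, so that $r$ is a subword of $v^\infty$; (b) it lies entirely in the right tail, so that $r$ is a subword of $u^\infty$; or (c) it straddles the junction, in which case $r=r_1r_2$ where $r_1$, the part of $r$ lying in the left tail, is a prefix of $r$ and a suffix of $v^{\infty/2}$ (hence a subword of $v^\infty$), and $r_2$ is a suffix of $r$ and a prefix of $u^{\infty/2}$ (hence a subword of $u^\infty$). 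As in Proposition~\ref{pra} I take $u$ and $v$ non-cyclic and not powers of the same word, and I assume the constants $k,l$ chosen large enough (as Lemma~\ref{LePerinR} requires) that $n|u|$ and $m|v|$ each exceed $|u|+|v|-1$ and that $n|u|+m|v|$ exceeds both $(n+1)|u|+|v|-1$ and $(m+1)|v|+|u|-1$.

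Cases (a) and (b) are immediate from Lemma~\ref{LePerinR}: in case (a), $r$ is a common subword of the finite word $r$ and of $v^\infty$ of length $|r|=n|u|+m|v|\ge(m+1)|v|+|u|-1$, which that lemma forbids; case (b) is symmetric, using the bound $(n+1)|u|+|v|-1$. In case (c), put $t=|r_1|$. If $t\le n|u|$ then $r_1$ is a prefix of the block $u^n$, so the last $m|v|$ letters of $r$, which spell $v^m$, lie inside $r_2$, whence $v^m$ is a subword of $u^\infty$; if $t\ge n|u|$ then $r_1$ contains the whole block $u^n$, whence $u^n$ is a subword of $v^\infty$. In either event a word of period $|v|$ (respectively $|u|$) and length $\ge|u|+|v|-1$ sits inside a periodic word of period $|u|$ (respectively $|v|$), so the overlapping lemma~\ref{Lh2.15} forces $u^\infty$ and $v^\infty$ to agree up to a shift; as $u,v$ are non-cyclic this says $u$ and $v$ are cyclic conjugates of each other, against the hypothesis. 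Hence $r$ is not a subword of $W'$; and since $v^pu^q$ is itself a subword of $W'$ for every $p,q$, $r$ is a subword of no $v^pu^q$ either.

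The step I expect to be the real obstacle is the last inference in case (c). It produces a contradiction only because the hypothesis excludes $u^\infty$ and $v^\infty$ from being translates of one another; if one allowed genuinely cyclically conjugate primitive periods (say $u=ba$, $v=ab$) then $v^m\subset u^\infty$ and $u^n\subset v^\infty$ would actually hold, so no contradiction would come from lengths — and indeed the bound in Lemma~\ref{LePerinR} can hold only under this same restriction. Since the present lemma is nonetheless true in that borderline case, covering it fully would require replacing the overlapping-lemma step by a comparison of period-defects: writing $v=cd$, $u=dc$ with $c,d$ nonempty, $W'$ reads $\dots(cd)(cd)\cdot(dc)(dc)\dots$ and so has a genuine period-$|v|$ defect exactly at its junction, while in case (c) the segment $r_1$ ends with a full copy of $v$ and $r_2$ begins with a full copy of $u$ (the cases where $r_1$ or $r_2$ is shorter than a period reducing as above), so $r$ contains the factor $(cd)(dc)$ straddling the boundary between its own blocks $u^n$ and $v^m$. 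Using Proposition~\ref{Th2.3}(b) to force the occurrences of $u$ inside $u^n$ and of $v$ inside $v^m$ to be phase-aligned, one would match these two defects and derive a contradiction; carrying out that matching cleanly, rather than relying on the ad hoc observation (valid in the example above) that $r$ contains a two-letter block absent from $W'$, is the part that needs the most care.
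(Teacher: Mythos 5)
Your argument is essentially the paper's: the paper's whole proof is the dichotomy that an occurrence of $r$ in $W'$ forces either $u^n\subset v^\infty$ or $v^m\subset u^\infty$, which it then declares impossible by Proposition~\ref{pr}; your three-case split (left tail, right tail, straddling the junction) reduces to the same two alternatives and rules them out via Lemma~\ref{LePerinR} and the overlapping Lemma~\ref{Lh2.15}, which is exactly the mechanism behind Proposition~\ref{pr}. As for the obstacle you flag: you are right that the length/overlap argument only yields ``$u$ and $v$ are cyclically conjugate,'' which is not excluded by ``not powers of the same word''; but the paper's own proof does not treat this case either --- Proposition~\ref{pr} concerns the concatenation $v^lu^s$ sitting inside a periodic word and does not literally exclude $u^n\subset v^\infty$, which in fact holds when $u=dc$, $v=cd$. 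Moreover, in that borderline case the companion Lemma~\ref{LePerinR}, stated under the same hypotheses and used alongside this lemma in Corollary~\ref{CoShiftPower}, is itself false (try $u=ba$, $v=ab$ with $m$ large), so the standing assumption in this block must be that $u^\infty$ and $v^\infty$ are not translates of one another, i.e.\ the noncyclic words $u,v$ are not cyclically conjugate. Under that reading your case (c) closes immediately at the overlapping-lemma step, the defect-matching you sketch is unnecessary, and your proof is complete and matches the paper's route while being more explicit about where the hypotheses are actually used.
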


\begin{proof}
If $r$ is a subword of $W'$ then either $u^n$ (i.e. left part of
$r$) is a subword of $v^\infty$ or $v^m$ (i.e. right part of $r$) is
a subword of $u^\infty$. Both cases are excluded by proposition
\ref{pr}
\end{proof}

\begin{prop}      \label{Pruinftyvinfty}
Consider superword $W=u^{\infty/2}v^{\infty/2}$, where $u\ne v$
are different noncyclic words. Let $S=u^kv^l$ and suppose
$|u^{k-1}|>2|v|$, $|v^{k-1}|>2|u|$, $k, l\ge 2$.

Then $S$ has just one occurrence in $W$, which is the obvious one
(which we call the ``standard occurrence'').
\end{prop}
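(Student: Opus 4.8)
The plan is to show that the only occurrence of $S=u^kv^l$ in $W=u^{\infty/2}v^{\infty/2}$ is the standard one. Write the standard occurrence so that the $v^l$ block of $S$ sits inside the right-hand $v^{\infty/2}$ tail. Now suppose there is a second occurrence of $S$ in $W$ at some other position. I would split into cases according to where this hypothetical occurrence lies relative to the ``seam'' where $u^{\infty/2}$ meets $v^{\infty/2}$: (i) entirely inside the left tail $u^{\infty/2}$; (ii) entirely inside the right tail $v^{\infty/2}$; (iii) straddling the seam.

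In case (i), the whole word $S=u^kv^l$ would be a subword of $u^\infty$; but $u^{k-1}v^l$ (a fortiori $u^kv^l$) has a block $v^{l}$ with $l\ge 2$ and, since $|v^{k-1}|>2|u|$ in particular $l|v|>2|u|$ once we note $l\ge 2$ forces... more carefully, I would apply Proposition~\ref{pr}: with the roles arranged so that the hypotheses $l|v|>2|u|$ and $s|u|>2|v|$ hold (which follow from $|u^{k-1}|>2|v|$, $|v^{k-1}|>2|u|$ and $k,l\ge 2$ after adjusting constants), $S=u^kv^l$ cannot be a subword of $u^\infty$, so case (i) is impossible. Case (ii) is symmetric: $S$ would be a subword of $v^\infty$, again contradicting Proposition~\ref{pr}.

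Case (iii) is the main obstacle and needs the period-switching machinery. Here the occurrence of $S$ straddles the seam, so $S=u^kv^l$ is a subword of $u^{\infty/2}v^{\infty/2}$ in a nonstandard way; that is, the left portion of this copy of $S$ lies in the $u$-periodic part and the right portion in the $v$-periodic part, but the internal split point of $S$ (between its $u^k$ block and its $v^l$ block) does \emph{not} coincide with the seam. Then either a proper suffix $u^a v^l$ of $S$ with $a\ge 1$ has its $u^a$ part landing inside $v^{\infty/2}$ — forcing $u^a$ (hence a power of $u$, hence by noncyclicity $u$ itself if $a$ large enough, or at least a long common subword) to be a subword of $v^\infty$, contradicting Proposition~\ref{pr} via Lemma~\ref{LePerinRa} applied to $r=u^nv^m$-type words — or a proper prefix $u^k v^b$ with $b\ge 1$ has its $v^b$ part landing inside $u^{\infty/2}$, symmetrically impossible. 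The bookkeeping is: from $k,l\ge 2$ we always have genuine blocks $u^{k-1}$ of length $>2|v|$ and $v^{l-1}$ of length $>2|u|$ available on the appropriate side of the misaligned seam, and Lemma~\ref{Lh2.15} (overlapping) or Proposition~\ref{pr} then forces $u$ and $v$ to be powers of a common word, contradicting $u\ne v$ noncyclic.

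Finally I would assemble the cases: the standard occurrence exists (trivially, since $u^k$ is a suffix-compatible initial of the $u$-tail and $v^l$ is a prefix-compatible initial of the $v$-tail — this is exactly where the seam is), and cases (i)--(iii) rule out every other position, so the occurrence is unique. The step I expect to fight with is extracting from ``$S$ straddles the seam at a nonstandard split'' the clean statement that one of $u^{k-1}$ or $v^{l-1}$ embeds in the opposite superword; this requires carefully tracking lengths around the seam and invoking Proposition~\ref{Th2.7} (the $uW=Wr$ lemma) to pin down that a long enough $u$-periodic stretch inside $v^\infty$ forces periodicity coincidence. Everything else is a routine application of Proposition~\ref{pr} and the overlapping lemma.
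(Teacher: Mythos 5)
Your cases (i) and (ii) are fine (they are exactly the situations Proposition~\ref{pr} excludes, after the obvious left--right mirroring), but case (iii) — the only genuinely hard case — has a real gap. Your ``bookkeeping'' claim, that a misaligned straddling occurrence always pushes a block $u^{k-1}$ of length $>2|v|$ (or $v^{l-1}$ of length $>2|u|$) into the opposite periodic tail, is false for small misalignments. Concretely, suppose the extra occurrence is shifted to the right of the standard one by a single period $|u|$: then only one copy of $u$ lands in the $v$-tail, the block $u^{k-1}$ stays comfortably inside $u^{\infty/2}$ and $v^l$ stays inside $v^{\infty/2}$, so neither Proposition~\ref{pr} nor Lemma~\ref{Lh2.15} applies to anything, and no contradiction falls out of ``a long power on the wrong side.'' (Your appeal to Lemma~\ref{LePerinRa} is also misplaced here: that lemma is about occurrences in $v^{\infty/2}u^{\infty/2}$, the seam in the opposite order, which is the configuration needed later in Corollary~\ref{CoShiftPower}, not inside $W=u^{\infty/2}v^{\infty/2}$.)

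The missing idea is an alignment (divisibility of the shift) argument, which is how the paper closes the case. Since the $u^k$-block of the extra occurrence overlaps the $u$-periodic tail in at least one full period, Proposition~\ref{Th2.3}(b) forces the shift $d$ to be a multiple of $|u|$, say $d=s|u|$; then the two occurrences give a relation of the form $u^sS=SR$, and Proposition~\ref{Th2.7} yields that $S=u^kv^l$ is itself a subword of $u^\infty$. Only now do the length hypotheses enter: $v^l\subset u^\infty$ with $l|v|>2|u|$ and Lemma~\ref{Lh2.15} force $v$ to be cyclically conjugate to $u$ with $|u|=|v|$, and the period-multiple alignment then gives $u=v$, contradicting the hypothesis. (Equivalently, in your notation one can argue that the copy's $v^l$ sits in $v^\infty$ at offset divisible by $|v|$, so the small stub $u^a$ that crosses the seam equals a power $v^b$, again forcing $u=v$ since both are noncyclic.) So the skeleton of your proof is salvageable, but as written the crucial case is not proved; you cannot avoid the divisibility step, and Proposition~\ref{Th2.7} is needed to show the whole of $S$ is $u$-periodic, not merely to handle ``a long $u$-stretch inside $v^\infty$.''
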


\begin{proof}
Otherwise the extra occurrence of $S$ is either to the left of the
standard occurrence, or to the right. Without loss of generality
it is enough to consider the left case.

In this case, by Proposition \ref{Th2.3} $W$ is shifted respect to
the standard occurrence by a distance divisible by $|u|$.

Hence we have: $u^sW=WR$, i.e. $u^sW$ starts with $W$. We can
apply Proposition \ref{Th2.7} and so we get that $u^{\infty/2}$
starts with $W$. Then from combining Lemma \ref{Lh2.15} and
Proposition \ref{Th2.3} we get that $v$ is cyclically conjugate to
$u$, and $|u|=|v|$.

But in that case $W=u^kv^l$ is subword of $u^\infty$, implying that
the relative shifts of $u$ and $v$ are divisible by $|u|=|v|$, and
hence $u=v$. The proposition is proved.
\end{proof}

This proposition together with Lemmas \ref{LePerinR} and
\ref{LePerinRa} implies

\begin{cor}  \label{CoShiftPower}
Let $R=r^\infty=(u^nv^m)^\infty$, $n>k, m>l$. Then all the
occurrences of $S$ in $R$ are separated by distances divisible by
$|r|=n|u|+m|v|$.
\end{cor}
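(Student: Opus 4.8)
\textbf{Proof plan for Corollary \ref{CoShiftPower}.}
The plan is to reduce the statement about the fully periodic superword $R=r^\infty$, where $r=u^nv^m$, to the two-sided ``period-switching'' picture already analyzed in Proposition \ref{Pruinftyvinfty}, and then use Proposition \ref{Th2.3}(b) to conclude that occurrences are separated by multiples of $|r|$. First I would fix one (standard) occurrence of $S=u^kv^l$ inside a distinguished block $r=u^nv^m$ of $R$: since $n>k$ and $m>l$, the word $S$ literally appears, reading the last $k$ copies of $u$ and the first $l$ copies of $v$ within a single $uv$-junction of $r$. This gives at least one occurrence; the content of the corollary is that every occurrence sits at such a junction, shifted by a full period $|r|$.

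Next I would take an arbitrary occurrence of $S$ in $R$ and locate it relative to the nearest $uv$-junction of the ambient periodic decomposition $R=\cdots u^nv^m\,u^nv^m\cdots$. The word $S$ contains the block $u^k$ followed by $v^l$; by hypothesis $|u^{k-1}|>2|v|$ and $|v^{k-1}|>2|u|$ (equivalently $|v^{l-1}|>2|u|$ after using $l\ge 2$, matching the form needed for Lemmas \ref{LePerinR} and \ref{LePerinRa}), so $u^k$ is ``long with respect to $v$'' and $v^l$ is ``long with respect to $u$'' in the precise sense those lemmas require. Lemma \ref{LePerinRa} then forbids $S=u^kv^l$ from being a subword of any hybrid word $v^pu^q$: in particular, the $u^k$-part of the occurrence cannot straddle a $v^m u^n$ boundary (it would then be a subword of $v^{\,\le m}u^{\,\le n}$, i.e. of $v^{\infty/2}u^{\infty/2}$, contradicting Lemma \ref{LePerinRa} since $u^{k}$ with $k>$ the relevant bound shares too long a subword with $v^\infty$ by Lemma \ref{LePerinR}). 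Hence the $u^k$-part lies entirely inside one $u^n$-block and similarly the $v^l$-part lies entirely inside one $v^m$-block; since $S$ is connected, these must be the $u^n$ and $v^m$ of one and the same $r$-block, and the occurrence is aligned with a junction of the standard decomposition.

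Once every occurrence is pinned to a junction of a copy of $r=u^nv^m$, two occurrences of $S$ differ by a translation that carries one such junction to another, hence by an integer multiple of $|r|=n|u|+m|v|$; this is exactly Proposition \ref{Th2.3}(b) applied to the period-$|r|$ word $R$ and the subword $S$ (which has length $\ge |r|$ once $k,l$ and the size hypotheses are taken into account, or one simply observes directly that a junction-to-junction shift is a period multiple). The main obstacle is the middle step: controlling an occurrence of $S$ that a priori could cross a block boundary of $R$ in an awkward way — e.g. with $u^k$ overlapping both a $v^m$-block and the following $u^n$-block, or with the $uv$-junction of $S$ not matching the $uv$-junction of $r$. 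This is precisely where one must invoke the overlapping Lemma \ref{Lh2.15} together with the length estimates of Lemma \ref{LePerinR} to show that a sufficiently long power $u^k$ (resp. $v^l$) cannot be shared with $v^\infty$ (resp. $u^\infty$), forcing the alignment; the hypotheses $|u^{k-1}|>2|v|$, $|v^{k-1}|>2|u|$ are tailored to make these estimates go through, and Lemma \ref{LePerinRa} packages the conclusion in the form ``$S$ is not a subword of $v^pu^q$'' that is directly applicable here.
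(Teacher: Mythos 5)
Your overall strategy matches the paper's in outline (pin every occurrence of $S$ to a $u\to v$ junction of the decomposition $R=\cdots u^nv^m\,u^nv^m\cdots$, then note that junction-to-junction shifts are multiples of $|r|$), but the middle step --- the only nontrivial one --- has genuine gaps. First, your exclusion of a straddling occurrence via Lemma \ref{LePerinRa} only works when the whole occurrence of $S$ stays inside a window of the form $v^{\infty/2}u^{\infty/2}$. If the $u^k$-part begins in a $v^m$-block and the $v^l$-part runs past the end of the following $u^n$-block into the next $v^m$-block, then the occurrence of $S$ is a subword of $v^mu^nv^m$ but of no word $v^pu^q$, so Lemma \ref{LePerinRa} gives no contradiction; and your appeal to Lemma \ref{LePerinR} at this point (``$u^k$ shares too long a subword with $v^\infty$'') is unfounded, since the overlap of the $u^k$-part with the preceding $v^m$-block can be as short as one letter. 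This is exactly the paper's separate case ``the occurrence contains a complete $u^n$ or $v^m$'', which it rules out by Lemma \ref{LePerinR} applied to the word $u^kv^l$ itself rather than to $u^k$.

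Second, and more seriously, the case in which the occurrence lies entirely inside one window $u^{\infty/2}v^{\infty/2}$ but is misaligned --- e.g.\ the $u^k$-part sits inside the $u^n$-block while the $v^l$-part starts strictly inside that block and straddles the $u\to v$ junction --- is not touched by your argument: nothing there straddles a $v^mu^n$ boundary, and your ``similarly'' for the $v^l$-part is not similar, because Lemma \ref{LePerinRa} concerns the switch $v\to u$, not $u\to v$. This configuration is precisely what Proposition \ref{Pruinftyvinfty} (uniqueness of the occurrence of $S$ in $W=u^{\infty/2}v^{\infty/2}$) exists to exclude; you announce it in your plan but never actually use it, and without it the claimed containments (hence the alignment) do not follow. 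The paper's proof is the clean trichotomy: occurrences corresponding to occurrences in $W$ (Proposition \ref{Pruinftyvinfty}), occurrences containing a full $u^n$ or $v^m$ (Lemma \ref{LePerinR}), and occurrences sitting on the $v\to u$ period switch (Lemma \ref{LePerinRa}); your write-up in effect covers only the last of these. A further small slip: since $k<n$ and $l<m$ we have $|S|=k|u|+l|v|<|r|$, so Proposition \ref{Th2.3}(b) does not apply to $S$ inside $R$ as you suggest, though your fallback observation that a junction-to-junction shift is a multiple of $|r|$ is correct and suffices once alignment is established.
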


\begin{proof}
First of all, as in the proposition \ref{Pruinftyvinfty} one can
define notion of {\it standard occurrence} of $S$ in $R$. Consider
an occurrence of $S$ in $R$. Then only following cases are logically
possible:

\begin{enumerate}
\item It naturally corresponds to occurrence of $S$ in $W$ (i.e. power of
$v$ in $S$ starts in on the power of $v$ in $W$ and similarly power
of $u$ in $S$ ends in on the power of $u$ in $W$).
\item It contains completely either $u^n$ or $v^m$.
\item It lies on the position of period switching from $v^m$ to $u^m$.
\end{enumerate}

Second possibility is excluded due to due to Lemma \ref{LePerinR},
third -- to due to Lemma \ref{LePerinRa}. First possibility due to
proposition \ref{Pruinftyvinfty} corresponds only to standard
occurrences and they are separated by distances divisible by
$|r|=|u^nv^m|=n|u|+m|v|$.
\end{proof}

Note that $r$ and $t$ are cyclically conjugate, iff
$r^\infty=t^\infty$. Using this observation and the previous
corollary we get a proposition needed in the sequel:

\begin{prop}
Let $u, v$ be different non-cyclic words, with  $|u^n|>2|v|$ and
$|v^n|>2|u|$. For all $k_i, l_i\ge n,$ if $(k_1,l_1)\ne
(k_2,l_2),$ then $u^{k_1}v^{l_1}$ and $u^{k_2}v^{l_2}$ are not
cyclically conjugate.
\end{prop}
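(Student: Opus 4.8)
The plan is to reduce the statement to Corollary~\ref{CoShiftPower} by recalling that two words $r,t$ are cyclically conjugate if and only if $r^\infty=t^\infty$ as superwords (the observation already noted before the proposition). Writing $r_i=u^{k_i}v^{l_i}$ for $i=1,2$, it suffices to show that $r_1^\infty=r_2^\infty$ forces $(k_1,l_1)=(k_2,l_2)$. So I would begin by supposing, for contradiction, that $r_1^\infty=r_2^\infty$ while $(k_1,l_1)\neq(k_2,l_2)$, and then aim to locate in this common superword an occurrence of the block $S=u^nv^n$ (or more conveniently $S=u^{k_1}v^{l_1}$ itself, but $u^nv^n$ suffices since $k_i,l_i\ge n$) that is forced to be ``non-standard'' relative to one of the two periodic structures.

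Here is how I would carry that out. Regard $R:=r_1^\infty=(u^{k_1}v^{l_1})^\infty$. By Corollary~\ref{CoShiftPower} (applied with the period word $u^{k_1}v^{l_1}$, noting $k_1>?$ — one needs the hypotheses $k,l$ of the corollary, which are met because $k_1,l_1\ge n$ and $|u^n|>2|v|$, $|v^n|>2|u|$), every occurrence of $S=u^nv^n$ in $R$ is separated from the standard ones by a multiple of $|r_1|=k_1|u|+l_1|v|$. On the other hand $R$ also equals $(u^{k_2}v^{l_2})^\infty$, and this second periodic description exhibits an occurrence of $S$ — namely the obvious $u^{k_2}v^{l_2}$-aligned one, sitting at the period break of the $r_2$-structure — at some position $p$. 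Comparing: this occurrence must coincide with an $r_1$-standard occurrence, hence its position $p$ is $\equiv 0 \pmod{|r_1|}$; by the symmetric argument (swapping the roles of $r_1$ and $r_2$), the $r_1$-standard occurrences are $r_2$-standard, so their positions are $\equiv 0 \pmod{|r_2|}$. Thus the set of positions of $S$ in $R$ is simultaneously $|r_1|\mathbb{Z}$ and $|r_2|\mathbb{Z}$ (after a common shift to make a standard occurrence sit at $0$), which forces $|r_1|=|r_2|$, i.e. $k_1|u|+l_1|v|=k_2|u|+l_2|v|$.

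From $|r_1|=|r_2|$ and $r_1^\infty=r_2^\infty$, the two period words $r_1$ and $r_2$ must be cyclic shifts of each other of the same length; but the standard occurrence of $S=u^nv^n$ inside each $r_i$ pins down the phase. Concretely, inside one full period of $R$ there is exactly one standard occurrence of $S$ (this is where Proposition~\ref{Pruinftyvinfty} does its work — within the window $u^{\infty/2}v^{\infty/2}$ there is a unique occurrence, and Corollary~\ref{CoShiftPower} transfers this to $R$), and reading off the $u$-run immediately following that occurrence recovers $k_i$ as ``the number of $u$'s between the end of the $v^n$-block and the next $v$''; similarly $l_i$ is determined. Since $R$ is a single superword, these readings are intrinsic to $R$ and cannot depend on $i$, giving $k_1=k_2$ and $l_1=l_2$, the desired contradiction.

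The main obstacle I anticipate is bookkeeping the exact inequalities needed to legitimately invoke Corollary~\ref{CoShiftPower} and Proposition~\ref{Pruinftyvinfty}: those results were stated with side conditions like $|u^{k-1}|>2|v|$, $n>k$, $m>l$, and one must check that with $k_i,l_i\ge n$ and the hypothesis $|u^n|>2|v|$, $|v^n|>2|u|$ of the present proposition, the block $S=u^nv^n$ (and the ambient period $r_i=u^{k_i}v^{l_i}$) genuinely satisfies them — in particular that $n$ is large enough that $u,v$ cannot be cyclically conjugate of equal length without being equal, which is what rules out the degenerate case in Proposition~\ref{Pruinftyvinfty}. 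Once the parameters are lined up, the combinatorial core is exactly the ``all occurrences separated by a period multiple'' statement, and the uniqueness-of-phase argument is then essentially automatic.
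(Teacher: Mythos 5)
Your overall route is the same as the paper's: pass to superwords via ``cyclically conjugate iff $r^\infty=t^\infty$'', fix a block $S=u^kv^l$ inside both decompositions, and use Proposition \ref{Pruinftyvinfty} together with Corollary \ref{CoShiftPower} to force every occurrence of $S$ in $R$ to be standard for both periodic structures, so that the two structures are aligned at a common occurrence of $S$. Up to the same off-by-one bookkeeping that the paper itself is loose about (and which you flag), that part is fine; note also that $|r_1|=|r_2|$ is immediate from cyclic conjugacy, so your modular-position argument for it is superfluous.

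The genuine gap is in your last step, which you dismiss as ``essentially automatic'': the claim that $k_i$ can be ``read off'' as the length of the $u$-run adjacent to the common occurrence of $S$, because ``these readings are intrinsic to $R$''. The block decomposition of the superword into $u$-runs and $v$-runs is \emph{not} intrinsic, and the maximal power of $u$ abutting the switching point need not be $u^{k_i}$: take $u=ab$, $v=abb$ (distinct, non-cyclic, not powers of a common word); then every $u^{k_i}$-block is followed by a $v$-block beginning with a copy of $u$, so the maximal $u$-power there has exponent at least $k_i+1$. Your conclusion can still be rescued (the overshoot is a constant depending only on $u,v$, by an overlapping argument, provided the exponents are large), but that is precisely the missing work, and it is exactly where the paper invests its effort: it extends the common occurrence of $S$ to overlapping occurrences $u^{n_1}Sv^{m_1}$ of $r_1$ and $u^{n_2}Sv^{m_2}$ of $r_2$, compares the two factorizations of their union $\omega=er_1=fr_2$, and derives an identity $u^a=v^b$ with $a,b>0$, which forces $u$ and $v$ to be powers of a common word --- contradicting the hypothesis. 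Without this (or an equivalent overlap computation), the phase-uniqueness step of your argument does not yet yield $k_1=k_2$, $l_1=l_2$.
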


\begin{proof}
Suppose that $r=u^{k_1}v^{l_1}$ and $t=u^{k_2}v^{l_2}$ are
cyclically conjugate. Then $r^\infty=t^\infty$ and because $r, t$
are not cyclical, $|r|=|t|=\mu$. Let us denote
$R=(u^{k_2}v^{l_2})^\infty$. Let $S=u^kv^l$ and suppose
$|u^{k-1}|>2|v|$, $|v^{k-1}|>2|u|$, $k, l\ge 2$ and also
$k\le\min{k_1,k_2}, l\le\min(l_1,l_2)$. It is clear that such $S$
exist and is a subword of booth $r$ and $t$.

Then due to corollary \ref{CoShiftPower} all occurrences of $S$ in
$W$ are shifted by distance divisible by $\mu$~-- period of $R$. It
means that any occurrence of $S$ can be extended to occurrence of
$r$ as well as to occurrence of $t$. Hence there exists an
occurrences of $r=u^{n_1}Sv^{m_1}$ and $t=u^{n_2}Sv^{m_2}$ in $W$
with common part $S$.

If $r\ne t$, then $n_1\ne n_2$ because $|s|=|t|$. Without loss of
generality we can suppose that $n_1<n_2$. In this case $m_1>m_2$.
Word $t$ is shifted to the left from the word $r$ on the distance
$d=|u^{n_1}|-|u^{n_2}|=|v^{n_2}|-|v^{m_1}|$.

Consider a union $\omega$ of $r$ and $t$. Then $\omega=er=ft$,
$|e|=|f|=d$. Note that $r^2$ is a subword of $t^\infty=W$,
occurrence of $v^{m_1}$ (which is end of $r$) precedes an occurrence
of $r$. Because $|v^{m_2}|>d$, $e=v^{m_2-m_1}$. Similarly
$f=u^{n_1-n_2}$.

From other hand $\omega$ can be also obtained by extending the
subword $S$ of $W$ to the left on the distance $|v^{\max(m_1,m_2)}|$
and to the right on the distance $|u^{\max(n_1,n_2)}|$ and
$\omega=u^{\max(n_1,n_2)}v^{\max(m_1,m_2)}=u^{n_1-n_2}r=tv^{m_2-m_1}$.

Hence $u^{n_1-n_2}=v^{m_2-m_1}$; $n_1\ne n_2; m_1\ne m_2$. It
follows that $u, v$ are powers of the same word $s$. Because $u\ne
v$ one of this powers is greater than $1$ and booth $u$ and $v$ can
not be non cyclic words. But this contradicts to their initial
choice.
\end{proof}

\section{Regular Words and Lie brackets}
We shall extend the relation $\prec$ by
defining the following $\rhd$-relation (``Ufnarovsky order''):
$f\rhd g$, if, for any two right superwords $W_1,W_2$, such that
$W_2(a,b)\succ W_1(a,b)$, when ever $b\succ a$, the inequality
$W_2(g,f)\succ W_1(g,f)$ holds. This condition is well defined and
equivalent to following: $f\rhd g$ iff $f^{\infty/2}\succ
g^{\infty/2}$ (i.e., $f^m\succ g^n$, for some $m$ and $n$). It is
clear that if $f\succ g$, then $f\rhd g$.

The relation $\rhd$ is a linear ordering on the following set of
equivalence classes: $f\sim g$, if for some $s$, $f=s^l$, $g=s^k$.

 Let us note that each finite word $u$ uniquely corresponds
to the right superword $u^\infty$. To equivalent words correspond
the same superwords. The relation $\rhd$ corresponds to the relation
$\succ$ on the set of superwords.

It is known (\cite{BBL}, \cite{Ufn}) that:

A word $u$ is called {\it regular}, if one of the following
equivalent conditions holds:

a) $u$ word is greater all its cyclic conjugates: If $u_1u_2=u$,
then $u\succ u_2u_1$.

b) If $u_1u_2=u$, then $u\rhd u_2$.

c) If $u_1u_2=u$, then $u_1\rhd u$.

A word $u$ is called {\it semi-regular} in the following case: If
$u=u_1u_2$, then, either $u\succ u_2$, or $u_2$ is a beginning of
$u$. (An equivalent definition can be obtained if the relation
$\lhd$ is replaced by the relation $\unlhd$ in the definition of a
regular word.) Every semi-regular word is a power of a regular
one.

It is well-known that every regular word $u$ defines the unique
bracket arrangement $[u]$ such that after opening all Lie brackets
$u$ will be a highest term in this expression. Moreover, monomials
of such type form a basis in the free Lie algebra (so called {\it
Hall--Shirshov basis}) ( see \cite{Bahturin}, \cite{Ufn}).

We shall need some technical statements:

\begin{lemma}[\cite{BBL}]  \label{LeConjswitch}
Suppose $|u^k|\rhd |v^2|$ and $u^k$ is a subword of $v^\infty$.
Then there exists $S'$ cyclically conjugate to $S$, such that
$u=(S')^m$ and $v=(S^2)^n$. If, moreover, the initial symbols of
$u$ and $v$ are at a distance divisible by $|S|$ in $v^\infty$,
then $S=S'.$
\end{lemma}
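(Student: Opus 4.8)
The statement to be proved is Lemma~\ref{LeConjswitch}: if $|u^k| \rhd |v^2|$ and $u^k$ is a subword of $v^\infty$, then some cyclic conjugate $S'$ of $S$ satisfies $u = (S')^m$ and $v = (S^2)^n$ (with $S = S'$ if the initial letters line up modulo $|S|$). The plan is to reduce everything to the overlapping lemma (Lemma~\ref{Lh2.15}) by building a long enough doubly-periodic overlap. First I would dispose of the trivial case $u$ cyclic: if $u = u_0^t$ then replacing $u$ by $u_0$ only strengthens the hypothesis, so I may assume $u$ non-cyclic, and likewise $v$ non-cyclic (replacing $v$ by its primitive root and adjusting $n$). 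The hypothesis $|u^k| \rhd |v^2|$ means precisely $|u^k| > 2|v|$ in the relevant sense, i.e. the occurrence of $u^k$ inside $v^\infty$ has length exceeding $|v| + |v| - 1$; so the word $u^k$, being a subword of $v^\infty$, is itself a periodic word of period $|v|$ on an interval of length $\ge |v^{k'}|$ for suitable $k' \ge 2$. But $u^k$ is also periodic of period $|u|$. Hence on this interval we have a word that is simultaneously periodic of period $|u|$ and of period $|v|$, of length $\ge |u| + |v| - 1$ (this is where $|u^k| > 2|v| \ge |u| + |v|$ is used), so Lemma~\ref{Lh2.15} forces $u$ and $v$ to coincide up to a shift.

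The point now is to extract the stated divisibility relations from "coincide up to a shift''. Since $v$ is non-cyclic, its occurrences in $v^\infty$ are at positions differing by multiples of $|v|$ (Proposition~\ref{Th2.3}b); and since the overlap has length $\ge |v|$, the primitive root of $u$ must be a cyclic conjugate of $v$. So let $S$ be the cyclic conjugate of (the primitive root of) $v$ that begins at the start of the occurrence of $u^k$ in $v^\infty$; then $u = S^m$ for some $m \ge 1$ by reading off the period, and going back to the original (possibly non-primitive) $v$ we get $v = (S^2)^n$ after absorbing the factor of $2$ from the statement's convention (the exponent $2$ is bookkeeping for the asymmetry in $|u^k| \rhd |v^2|$). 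For the final clause: if the initial symbols of $u$ and $v$ in $v^\infty$ sit at a distance divisible by $|S|$, then the cyclic conjugate $S'$ cutting out $u$'s period and the conjugate determining $v$'s period are read from the same phase, so $S = S'$.

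The main obstacle, and the step I would be most careful about, is matching the bookkeeping: making sure the quantitative hypothesis $|u^k| \rhd |v^2|$ really does deliver an overlap of length $\ge |u| + |v| - 1$ so that Lemma~\ref{Lh2.15} applies cleanly, and then tracking the primitive roots so that the output is stated with $S$, $S^2$, $S'$ exactly as in the lemma rather than with unspecified cyclic conjugates. Everything else is a routine consequence of Propositions~\ref{Th2.3} and~\ref{Th2.7}: once $u$ and $v$ are known to share a primitive root up to conjugacy, the exponent relations and the phase condition are just position-counting in $v^\infty$. I expect no genuinely new idea is needed beyond the overlapping lemma; the work is purely in the constants.
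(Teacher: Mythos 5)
The paper itself gives no proof of this lemma: it is imported verbatim from \cite{BBL} (and its statement is visibly garbled there --- $S$ is never introduced and the exponent in $v=(S^2)^n$ is surely a misprint for $v=S^n$). So there is no in-paper argument to compare against; your route --- reduce to primitive $u,v$, observe that $u^k$ carries both the period $|u|$ and, as a factor of $v^\infty$, the period $|v|$, and then invoke the overlapping Lemma~\ref{Lh2.15} to force the two periodic superwords to coincide up to a shift, after which the exponent and phase clauses are position counting via Proposition~\ref{Th2.3} --- is the standard argument and certainly the intended one.

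However, there is a genuine flaw at exactly the step you flagged as the delicate one. You justify the hypothesis of Lemma~\ref{Lh2.15} by the chain $|u^k|>2|v|\ge |u|+|v|$, and the second inequality is simply false unless $|v|\ge|u|$, which is not given. The correct bookkeeping must also use $|u^k|\ge 2|u|$, i.e.\ it must use $k\ge 2$: then $|u^k|\ge\max(2|u|,\,2|v|)\ge |u|+|v|>|u|+|v|-1$ and the overlapping lemma applies. This is not cosmetic, because for $k=1$ the statement read literally is false, and your sketch, which nowhere uses $k\ge2$, would ``prove'' it: take $v=ab$, $u=babab$; then $|u^1|=5>4=|v^2|$ and $u$ is a subword of $v^\infty$, yet $u$ is primitive and is not a power of any cyclic conjugate of $v$ (its length is not a multiple of $|v|$). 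So you must assume $k\ge2$ (as all the applications in the paper do, where the exponents are large) and run the length estimate through $2|u|$ as above. With that repair, the remainder of your outline --- both primitive roots conjugate by Lemma~\ref{Lh2.15}, hence $u=(S')^m$ and $v=S^n$ for conjugate $S,S'$, and $S=S'$ when the two starting positions differ by a multiple of $|S|$, by Proposition~\ref{Th2.3}(b)-type position counting --- goes through.
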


\begin{cor}    \label{Co2SrlrPwrs}
Let $u\rhd v$ be semi-regular words. Then, for sufficiently large
$k$ and $l$, the words $u^kv^l$ are regular and
$$
u^{k_1}v^{l_1}\rhd u^{k_2}v^{l_2}
$$
for $k_1>k_2$.
\end{cor}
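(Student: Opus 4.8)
The plan is to prove the two assertions of Corollary~\ref{Co2SrlrPwrs} in tandem, using Lemma~\ref{LeConjswitch} to control the possible cyclic conjugates of $u^kv^l$ and the $\rhd$-order machinery to compare different exponent pairs. First I would reduce to the case that $u$ and $v$ are both regular (hence non-cyclic): since every semi-regular word is a power of a regular one, replacing $u,v$ by their regular roots changes $u^kv^l$ only by renaming exponents, and the strict inequality $u\rhd v$ descends to the roots because $\rhd$ is well defined on equivalence classes $f\sim g$ (powers of a common word). So from now on $u\rhd v$ are distinct non-cyclic regular words, and in particular $u^{\infty/2}\succ v^{\infty/2}$.

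Next I would establish regularity of $w:=u^kv^l$ for large $k,l$. By definition (a), I must show that $w\succ w'$ for every proper cyclic conjugate $w'$ of $w$. A cyclic conjugate is obtained by cutting $w$ at some interior position; there are three cases according to whether the cut falls strictly inside the $u$-block, at the junction, or strictly inside the $v$-block. When the cut falls inside the $u$-block, the conjugate begins with a proper tail of a power of $u$ followed eventually by $v^l$; since $u$ is regular, $u\succ$ (any proper cyclic conjugate of $u$), and after taking $k$ large the lexicographic comparison is decided well before the $v$-block is reached, so $w$ wins. When the cut falls at the junction, the conjugate is $v^lu^k$, and $w=u^kv^l\succ v^lu^k$ because $u^{\infty/2}\succ v^{\infty/2}$ forces the comparison in favour of the prefix $u^k$ once $k$ is large (this is exactly where Proposition~\ref{pr} and the periods-switching results guarantee $v^lu^k$ is not itself a power or a pathological overlap, so no accidental equality occurs). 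The cut inside the $v$-block is handled the same way using regularity of $v$. To make ``$k$ large, then $l$ large'' uniform I would fix thresholds depending only on $|u|,|v|$ coming from the hypotheses $|u^n|>2|v|$, $|v^n|>2|u|$ of the ambient setup, so that Lemma~\ref{LeConjswitch} and Proposition~\ref{pr} apply.

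For the monotonicity statement, suppose $k_1>k_2$ (with all exponents at least the threshold), and set $r=u^{k_1}v^{l_1}$, $t=u^{k_2}v^{l_2}$. I want $r\rhd t$, equivalently $r^{\infty/2}\succ t^{\infty/2}$, equivalently $r^m\succ t^n$ for suitable $m,n$. Comparing the two right-superwords letter by letter: both begin with $u^{k_2}$; since $k_1>k_2$, the superword $r^{\infty/2}$ continues with $u\cdots$ (more of the $u$-block) at the position where $t^{\infty/2}$ has already switched into $v$. Because $u\rhd v$, i.e. $u^{\infty/2}\succ v^{\infty/2}$, at the first position of disagreement $r$ carries a letter from $u$'s periodic pattern that is lexicographically larger than the corresponding letter of $v$'s pattern in $t$; hence $r^{\infty/2}\succ t^{\infty/2}$. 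The one thing to verify carefully is that there \emph{is} a disagreement before wrap-around confuses things, i.e. that $r$ and $t$ are not cyclically conjugate and not powers of a common word — but this is precisely what the final proposition of the previous section (together with Proposition~\ref{pra}) gives us for exponents $\ge n$, so the comparison is genuine and strict.

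The main obstacle I anticipate is the junction case of the regularity argument, and more precisely ruling out ``accidental'' coincidences when comparing $u^kv^l$ with a cyclic conjugate whose period structure could, a priori, align with $u^\infty$ or $v^\infty$ over a long stretch. This is where the overlap Lemma~\ref{Lh2.15}, Lemma~\ref{LeConjswitch}, and the periods-switching Propositions~\ref{pr}--\ref{pra} do the real work: they force any long common subword of $u^kv^l$ with $u^\infty$ (resp.\ $v^\infty$) to be short relative to $k|u|$ (resp.\ $l|v|$), so once $k,l$ exceed the thresholds the lexicographic comparison is settled inside the first block and no conjugate can tie or beat $u^kv^l$. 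Everything else is a bookkeeping of lengths against the fixed thresholds coming from $|u^n|>2|v|$ and $|v^n|>2|u|$.
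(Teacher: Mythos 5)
Your overall strategy (reduce to regular roots, then do a case analysis over where the cyclic cut falls, then compare superwords for the monotonicity) is a legitimate alternative to the paper's argument, which is much shorter: the paper takes the weak inequality $u^kv^l\unrhd\delta$ for every cyclic conjugate $\delta$ as clear, so that regularity reduces to showing $u^kv^l$ is not a proper power, which is exactly Proposition~\ref{pra} (this is the only place where ``sufficiently large $k,l$'' is used), and it does not argue the monotonicity at all. However, as written your case analysis has concrete gaps precisely at the block boundaries, which is where the real content of such a proof lives. For a cut inside the \emph{last} copy of $u$, the conjugate is $u_2v^lu^{k-1}u_1$ and its $v$-block begins after only $|u_2|<|u|$ letters; taking $k$ large does not postpone this, so ``the comparison is decided well before the $v$-block is reached'' is simply false in this subcase. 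What saves it is a separate fact you never invoke, e.g.\ that a regular word is unbordered (no proper suffix is also a prefix), which forces $u$ and $u_2$ to disagree within the first $|u_2|$ letters in $u$'s favour. Likewise, for a cut inside the last copy of $v$ the conjugate is $v_2u^kv^{l-1}v_1$, and this is \emph{not} ``handled the same way using regularity of $v$'': there you must compare the $u$-periodic prefix of $u^kv^l$ with $v_2$ followed immediately by $u^k$, and neither $u\rhd v$ nor $v\rhd v_2$ locates the first disagreement inside $v_2$; this subcase needs its own argument.

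The monotonicity step has the same kind of gap. If $k_1=k_2+1$, the extra $u$-run in $r^{\infty/2}$ has length only $|u|$, and the first disagreement with $t^{\infty/2}$ need not occur inside it: take $b\succ a$, $u=b$, $v=ba$ (so $u\rhd v$, both regular); after the common prefix $b^{k_2}$ the comparison is decided at a position already lying in the $v$-part of $r$, so it is not ``a letter of $u$'s pattern against a letter of $v$'s pattern''. Knowing via Proposition~\ref{pra} and the last proposition of Section~2 that $r$ and $t$ are not conjugate only gives $r^\infty\ne t^\infty$, i.e.\ strictness of \emph{some} inequality, not its direction. The missing ingredient is that when $u$ is a prefix of $v^{\infty/2}$ one should use that the standard phase of $v^\infty$ (starting at the beginning of the regular word $v$) is lexicographically maximal among all its shifts, so that inserting an extra $u$ still increases the superword. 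With these boundary lemmas supplied, your route does go through, but in its present form the argument would fail exactly on the cases it declares routine.
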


\begin{proof}
Let $\delta$ be a cyclic conjugate of $u^kv^l$. It is clear that
$\delta \unrhd u^kv^l$, we only need to prove inequality $\delta \ne
u^kv^l$. In order to do this, we need only to show that $u^kv^l$ is
not cyclic word, but it follows from the proposition \ref{pra}.

\end{proof}

The next lemma follows from Lemma \ref{LeConjswitch} and Corollary
\ref{Co2SrlrPwrs}.

\begin{lemma}     \label{Le2powersConj}
Let $k_i>|d|,\ l_i>|u|,$ for $i=1,2$. Then $u^{k_1}d^{l_1}$ and
$u^{k_2}d^{l_2}$ are not cyclically conjugate, provided that
$u\rhd d$ and $u,d$ are not conjugate to proper powers of the same
word.
\end{lemma}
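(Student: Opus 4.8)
The plan is to reduce the statement to Corollary~\ref{CoShiftPower} by the same strategy already used for the non-conjugacy result on $u^{k}v^{l}$, but now keeping track of the $\rhd$-order rather than just lexicographic comparability, so that we may invoke Lemma~\ref{LeConjswitch} in its sharp form. First I would argue that we may replace $u$ and $d$ by the regular words to which they are conjugate to a power: since $u\rhd d$ and neither is conjugate to a proper power of the same word, the underlying primitive cycles are genuinely different non-cyclic words, and by Corollary~\ref{Co2SrlrPwrs} for $k_i,l_i$ large (which is guaranteed by $k_i>|d|$, $l_i>|u|$ once we pass to primitives — here I would note the mild point that $|d|,|u|$ bound $k_i,l_i$ comfortably above $2$ times the other period, so Proposition~\ref{pra} and Corollary~\ref{CoShiftPower} apply) the words $u^{k_i}d^{l_i}$ are themselves regular, in particular non-cyclic, and $u^{k_1}d^{l_1}\rhd u^{k_2}d^{l_2}$ whenever $k_1>k_2$.

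Next, suppose for contradiction that $r=u^{k_1}d^{l_1}$ and $t=u^{k_2}d^{l_2}$ are cyclically conjugate. Then $r^{\infty}=t^{\infty}$ and, both being non-cyclic, $|r|=|t|$. Now I would run exactly the bookkeeping argument from the preceding proposition: inside $R=r^{\infty}=t^{\infty}$ pick a common subword $S=u^{k}d^{l}$ with $k,l\ge 2$, $|u^{k-1}|>2|d|$, $|d^{k-1}|>2|u|$ and $k\le\min(k_1,k_2)$, $l\le\min(l_1,l_2)$; such $S$ exists because the $k_i,l_i$ are large. By Corollary~\ref{CoShiftPower} every occurrence of $S$ in $R$ is shifted by a multiple of $|r|$, so a single occurrence of $S$ extends both to an occurrence of $r$ and to an occurrence of $t$. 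If $r\ne t$ then $k_1\ne k_2$, say $k_1<k_2$, forcing $l_1>l_2$, and the two extensions overlap with common core $S$; comparing the overhangs on the left and right exactly as before yields $u^{k_1-k_2}=d^{l_2-l_1}$ with both exponents nonzero, hence $u$ and $d$ are powers of a common word, contradicting the hypothesis.

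The one place where this is not a verbatim copy of the earlier proposition is that there the conclusion only needed $u\ne v$ non-cyclic, whereas here $u$ and $d$ are merely asserted to be non-conjugate to powers of a common word and are ordered by $\rhd$; so the main obstacle is the reduction in the first paragraph, namely checking that passing to the primitive regular representatives is legitimate and that the size hypotheses $k_i>|d|$, $l_i>|u|$ survive this passage in the form needed by Corollary~\ref{CoShiftPower}. This is where Lemma~\ref{LeConjswitch} earns its keep: it is precisely the tool that lets us identify, from a period overlap of length exceeding $|v^{2}|$, that $u$ and $d$ would have to be powers of a single $S$ with matched phases, which is what rules out the degenerate alignment and closes the contradiction. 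Once that reduction is in place the rest is the routine overhang comparison, and I would not write it out in detail beyond citing the earlier proposition.
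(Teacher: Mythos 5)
Your argument is not the one the paper has in mind. The paper disposes of this lemma in one line, citing Lemma~\ref{LeConjswitch} and Corollary~\ref{Co2SrlrPwrs}; the intended derivation is short: by Corollary~\ref{Co2SrlrPwrs} the words $u^{k_i}d^{l_i}$ are regular, and a regular word is strictly greater than every other word in its cyclic-conjugacy class, so if the two words were cyclically conjugate they would coincide; cancelling the common prefix $u^{k_2}$ and common suffix $d^{l_1}$ then gives $u^{k_1-k_2}=d^{l_2-l_1}$, and Lemma~\ref{LeConjswitch} (or the overlapping lemma) turns this into the statement that $u$ and $d$ are conjugate to powers of one word, contrary to hypothesis. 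You instead rerun the occurrence-and-shift argument of Corollary~\ref{CoShiftPower} and the final proposition of Section~2 after passing to primitive roots. That is a legitimate alternative, and in one respect cleaner: it does not need $u,d$ to be semi-regular, whereas Corollary~\ref{Co2SrlrPwrs} is stated only for semi-regular words; on your route the appeals to that corollary and to Lemma~\ref{LeConjswitch} in your last paragraph do no real work.

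Two points in your write-up need repair. First, the reduction in your opening paragraph is wrong as stated: you cannot \emph{replace} $u$ and $d$ by (regular) conjugates of their primitive roots, since conjugating $u$ and $d$ separately does not preserve cyclic conjugacy, or non-conjugacy, of the composite words $u^kd^l$. What does work is to factor $u=p^a$, $d=q^b$ with the actual, unconjugated primitive roots: the hypothesis that $u,d$ are not conjugate to powers of one word gives $p\ne q$, both non-cyclic and not powers of a common word, and $u^{k_i}d^{l_i}=p^{ak_i}q^{bl_i}$, so the Section~2 proposition applies to $p,q$ directly; no regularity of the roots is needed anywhere. Second, your parenthetical claim that $k_i>|d|$, $l_i>|u|$ put you ``comfortably above'' the thresholds of Proposition~\ref{Pruinftyvinfty} and Corollary~\ref{CoShiftPower} fails in edge cases: for $|u|=1$ and $k_i=|d|+1$ one has $|u^{k_i-1}|=|d|$, which is not $>2|d|$. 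Either strengthen the exponent bounds or state explicitly that, as everywhere in this part of the paper, the claim is meant for sufficiently large exponents; with that caveat (which the paper's own loose constants also require) your argument goes through.
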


\section{Words in automata algebras}
By $\Phi\langle x_1,\!\dots\!,x_s\rangle$ will be denoted the free
associative $\Phi$-al\-geb\-ra with generators $x_1,\!\dots\!,x_s$.
By $A\langle a_1,\dots,a_s\rangle$ will be denoted an arbitrary
$\Phi$-al\-geb\-ra with a fixed set of generators $a_1,\dots, a_s$.
A {\it word} or a {\it monomial} from the set of generators
$\mathcal{M}$ is an arbitrary product of elements in $\mathcal M$.
The set of all words constitutes a semigroup, which will be denoted
by $\Wd\langle \mathcal M\rangle$. The order $a_1\prec \cdots\prec
a_s$ generates the lexicographic order on the set of words: The
grater of two words  is the one whose first letter is greater; if
the first symbols coincide, then the second letter are compared,
then the third letters and so on. Two words are incomparable, only
if one of them is initial in the other.

By a {\it word} in an algebra we understand a nonzero word from
its generators $\{a_i\}$. We cannot speak about the {\it value} of
a superword in an algebra, but can speak about its {\it equality}
or {\it nonequality to zero} (and, in some cases, about {\it
linear dependence}). A superword $W$ is called {\it zero
superword}, if it has a finite zero subword, and it is called a
{\it nonzero superword}, if it has no finite zero subwords.

An algebra $A$ is called {\it monomial}, if it has a base of
defining relations of the type $c=0$, where $c$ is a word from
$a_1,\dots,a_s$. Obviously, a monomial algebra is a semigroup
algebra (it coincides with the semigroup algebra over the semigroup
of its words).

\subsection{Automata algebras}

 First we  recall some well known definitions from \cite{BBL}.
Suppose we are given an alphabet (i.e., a finite set) $X$. {\it By
finite automaton (FA)} with the alphabet $X$ of input symbols we
shall understand an oriented graph $G$, whose edges are marked
with the letters from $X$. One of the vertices of this graph is
marked as initial, and some vertices are marked as final. A word
$w$ in the alphabet $X$ is called {\it accepted} by a finite
automaton, if there exists a path in the graph, which begins at
the initial vertex and finishes in some final vertex, such that
marks on the path edges in the order of passage constitute the
word $w$.

By a {\it language} in the alphabet $X$ we understand some subset
in the set of all words (chains) in $X$. A language $L$ is called
regular or automata, if there exists a finite automaton which
accepts all words from $L$ and only them.

An automaton is called {\it deterministic}, if all edges, which
start from one vertex are marked by different letters (and there
are no edges, marked by the empty chain). If we reject such
restriction and also allow edges, marked by the empty chain, then
we shall come to the notion of a {\it non-deterministic} finite
automaton. Also we can allow an automaton to have several initial
vertices. The following result from the theory of finite automata
is well known:

{\it For each non-deterministic FA there exists a deterministic
FA, which accepts the same set of words (i.e. the same language).}

It will be convenient for us to consider the class of FA, such that
all vertices are initial and final simultaneously. The reason of
this is that the language of nonzero words in a monomial algebra has
the following property: each subword of a word belonging to the
language, also belongs to it.

Suppose throughout that $G$ is the graph of a deterministic FA,
$v$ is a vertex of $G$, and $w$ is a word. If the corresponding
path $C$ starting from $v$ exist in $G$, then one can define the
vertex $vw$ terminal vertex for $C$.

Let $A$ be a monomial algebra (not necessary finitely defined). $A$
is called an {\it automata algebra}, if the set of all of its
nonzero words from $A$ generators is a regular language. Obviously,
a monomial algebra is an automata algebra, only if the set of its
nonzero words is the set of all subwords of words of some regular
language.

 It is known that every automata algebra can be given by a certain
deterministic graph, and that every finitely defined monomial
algebra is automata (\cite{BBL}, \cite{Ufn}).

The Hilbert series for an automata algebra is rational
(Proposition 5.9 \cite{BBL}). An automata algebra has exponential
growth if and only if $G$ has two cycles $C_1$ and $C_2$ with
common vertex $v$, such that the corresponding words $w_1$, $w_2$
(we read them starting from $v$) are not powers of the same word.
In this case the words $w_1$ and $w_2$ generate a free 2-generated
associative algebra. If there are no such cycles, $A$ has a
polynomial growth. No intermediate growth is possible.

The following theorem is the aim of this section:

\begin{theorem}\label{main}
Let $A=\langle a_1,\dots, a_n\rangle$ be an automata algebra of
exponential growth. Then the Lie algebra $A^{\sim}$  contains a
free $2$-generator subalgebra.
\end{theorem}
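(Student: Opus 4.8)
The plan is to exploit the combinatorial machinery of periods and period switching developed in Section~2 to manufacture, inside $A^\sim$, two Lie elements whose bracket monomials behave like a free associative pair, and then to invoke the standard fact that a $2$-generated Lie algebra is free provided no nontrivial Lie relation holds among the generators. Since $A$ has exponential growth, the defining graph $G$ contains two cycles $C_1,C_2$ through a common vertex $v$ spelling words $w_1,w_2$ that are not powers of a common word; replacing $w_1,w_2$ by suitable cyclic conjugates we may assume they are regular (and not conjugate to powers of a common word), and by passing to high powers we may assume $w_1\rhd w_2$ with $|w_1^{\,n}|>2|w_2|$, $|w_2^{\,n}|>2|w_1|$ for the relevant $n$, so that Corollary~\ref{Co2SrlrPwrs}, Lemma~\ref{Le2powersConj}, and Proposition~\ref{Pruinftyvinfty} all apply. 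Set $u=w_1$, $d=w_2$.

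Next I would choose as the two free Lie generators the Hall--Shirshov bracketings $e_1=[u^{k}d^{l}]$ and $e_2=[u^{k}d^{l'}]$ (with $l\neq l'$, and $k,l,l'$ large), each of which is a regular word by Corollary~\ref{Co2SrlrPwrs} whose highest term under opening of brackets is the word itself. More generally, any Lie monomial in $e_1,e_2$ of bracket-length $m$ has, after opening all brackets, a highest term of the form $u^{k}d^{a_1}u^{k}d^{a_2}\cdots u^{k}d^{a_m}$ where each $a_i\in\{l,l'\}$ — this is because the Ufnarovsky order $\rhd$ is multiplicative in the sense needed to read off leading terms of Lie brackets, and the ``separating blocks'' $u^{k}$ (with $k$ huge relative to $l,l'$) force the leading word of a product to be the concatenation of leading words. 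The key uniqueness input is Corollary~\ref{CoShiftPower} together with the final proposition of Section~2: distinct sequences $(a_1,\dots,a_m)\neq(a_1',\dots,a_m')$ give words $u^{k}d^{a_1}\cdots u^{k}d^{a_m}$ and $u^{k}d^{a_1'}\cdots u^{k}d^{a_{m'}'}$ that are not cyclically conjugate and, crucially, have no ``unexpected'' occurrences inside one another — so the $2^m$ leading words arising from bracket-length-$m$ monomials are pairwise distinct and no cancellation among them is possible in $A$.

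With these leading terms pinned down, a nontrivial Lie relation $\sum c_j \,\mu_j(e_1,e_2)=0$ in $A^\sim$ would, upon taking the highest surviving term, produce a vanishing nonzero linear combination of these distinct non-equal words in the monomial algebra $A$ — impossible, since distinct nonzero words in a monomial algebra are linearly independent. Hence $e_1,e_2$ generate a free Lie subalgebra of rank $2$ (and, taking countably many exponents $l_1<l_2<\cdots$, of countable rank as well). The main obstacle I anticipate is precisely the leading-term analysis: verifying that for an arbitrary iterated Lie bracket in $e_1,e_2$ the highest monomial is the ``naive'' concatenation and that no two such concatenations (over different bracket structures or different exponent sequences) collide or occur nonstandardly inside one another. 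This is exactly where the period-switching results — Propositions~\ref{pr}, \ref{pra}, \ref{Pruinftyvinfty}, Corollary~\ref{CoShiftPower}, and Lemmas~\ref{LePerinR}, \ref{LePerinRa}, \ref{Le2powersConj} — must be combined carefully, since the subtlety is entirely about where periodic blocks $u^\infty$ and $d^\infty$ can overlap after a switch.
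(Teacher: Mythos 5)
Your overall strategy (two cycle words from exponential growth, period--switching combinatorics to separate powers, Hall--Shirshov bracketings whose leading words are distinct nonzero monomials, then linear independence of distinct words in a monomial algebra) is the same skeleton as the paper's deduction of Theorem~\ref{main} from its Main Lemma. But there is a genuine gap at the very first reduction: you write ``replacing $w_1,w_2$ by suitable cyclic conjugates we may assume they are regular.'' Cyclic conjugation of a well--based word moves its base vertex (if $w_1=w_{11}w_{12}$ is read on a cycle at $v$, the conjugate $w_{12}w_{11}$ is based at $vw_{11}$), so after conjugating $w_1$ and $w_2$ separately you lose exactly the property that made the construction work: that both words are read on cycles through a \emph{common} vertex. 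Without a common base vertex there is no guarantee that the mixed words $W(u,d)$ --- in particular your claimed leading words $u^{k}d^{a_1}u^{k}d^{a_2}\cdots u^{k}d^{a_m}$, and indeed the elements $e_1,e_2$ themselves --- are nonzero in the monomial (automata) algebra $A$; the algebra kills words not readable on the graph, the ``highest surviving term'' may simply vanish, and the leading--term argument collapses. Note also that the original cycle words need not even be conjugate to regular words (they may be proper powers), which is why the paper works with semi-regular words and only obtains regularity for the composite powers via Corollary~\ref{Co2SrlrPwrs}.

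This is precisely the content of the paper's Main Lemma, which you have effectively assumed: one must produce two \emph{regular pairwise well--based} words, i.e.\ regular words written on cycles with a common initial vertex. The paper gets this by a pigeonhole argument: the words $w_1^{k}w_2^{l}$ are all well--based at the common vertex $v$, Lemma~\ref{Le2powersConj} (via the period--switching results you cite) shows they are pairwise non--conjugate, each is cyclically conjugate to a regular word which is again well--based (at some vertex, by the conjugation lemma), and since the automaton has finitely many vertices infinitely many of these regular well--based words share a base vertex, hence are pairwise well--based. Once that is in hand, all words $W(u,v)$ are automatically nonzero and the Hall--Shirshov leading--term argument (the fact that $w(u,v)$ is regular with bracket arrangement $[w]([u],[v])$, which the paper quotes from \cite{BBL}, \cite{Ufn}, and which your ``separating blocks'' heuristic is trying to reprove by hand) finishes the proof. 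So your proposal needs this missing pigeonhole/well--basedness step inserted before the leading--term analysis can be justified.
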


We continue to assume that  $G$ is the graph of  a deterministic
FA. Call a se\-mi-re\-gu\-l\-ar word $u$ {\it well--bas\-ed} if it
written on a certain cycle $C$ with an initial vertex $v$; i.e.
$vu=v$. Two se\-mi--reg\-ul\-ar words $u_1$ and $u_2$ are {\it
pa\-ir--wi\-se\-ly well--bas\-ed} if $u_1$ and $u_2$ are written
on cycles $C_1$ and $C_2$ with a common initial vertex $v$ and
$vC_1=vC_2=v$. In this case, for any word $W(a,b)$ the word
$W(u_1,u_2)\ne 0$ in $A$; in particular $u_1^{k_1}u_2^{k_2}\ne 0$.

\medskip
\noindent{\bf Main Lemma.}\ {\it The graph $G$ contains two
regular pairwise well--based words $u\neq v$.}
\medskip

\noindent{\it Deduction of Theorem \ref{main} from the Main
Lemma.} We may always assume that $u\rhd v$. Let $a\succ b$ and
$w$ a regular word. Then (see \cite{BBL}) $w(u,v)$ also is a
regular wold.

For every regular word $u$ we can choose a unique presentation
$u=u_1u_2$ with regular  $u_1$ and regular $u_2$ of maximal
length. In this case $[u]=[[u_1],[u_2]]$ (see \cite{Ufn}).
Therefore, $w(u,v)$ can be obtained by setting $[u]\mapsto a$,
$[v]\mapsto b$ to the word with brackets $[w]$.

Since $u$ and $v$ are well-based, for every word $R(a,b),$ one has
$R(u,v)\neq 0$. Let $[u],[v]$ be the results of the regular
arrangement of the brackets for $u$ and $v$ respectively. Then
$[u]\neq 0,\ [v]\neq 0$.

Thus we have constructed a one-to-one correspondence between the
Hall basis of a Lie algebra, generated by $[u],[v]$ and the Hall
basis of a free $2$-ge\-ne\-ra\-t\-ed Lie algebra with generators
$a,b$. The theorem follows.\ $\Box$

\subsection{Proof of the Main Lemma}
Corollary \ref{Co2SrlrPwrs} implies the following:

\begin{prop}
Suppose the graph $G$ contains two ordered (in the sense of the
operation $\rhd$) pairwise well-based words. Then $G$ contains
also two regular pairwise well-based words.
\end{prop}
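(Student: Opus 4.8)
The plan is to bootstrap the two given semi-regular words into regular ones by means of Corollary~\ref{Co2SrlrPwrs}. By hypothesis $G$ carries two ordered pairwise well-based words; being pairwise well-based they are in particular semi-regular, and after relabelling we may assume they are $u \rhd v$, read off cycles $C_1$ and $C_2$ through a common vertex $v_0$ with $v_0 C_1 = v_0 C_2 = v_0$. First I would apply Corollary~\ref{Co2SrlrPwrs}: it provides a threshold $N$ such that $u^k v^l$ is regular for all $k, l \ge N$, and such that $u^{k_1} v^{l_1} \rhd u^{k_2} v^{l_2}$ whenever $k_1 > k_2$. Fixing $l = N$, I set
\[
  w_1 = u^{N+1} v^{N}, \qquad w_2 = u^{N} v^{N},
\]
so that both $w_1$ and $w_2$ are regular.

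Next I would check the two remaining requirements. \emph{Distinctness:} since $N+1 > N$, Corollary~\ref{Co2SrlrPwrs} gives $w_1 \rhd w_2$; as $\rhd$ is a strict ordering of $\sim$-classes, $w_1$ and $w_2$ lie in distinct classes, and in particular $w_1 \ne w_2$ (one may also quote Lemma~\ref{Le2powersConj}, which shows $w_1$ and $w_2$ are not even cyclically conjugate). \emph{Well-basedness:} the word $w_1$ is spelled by the closed walk at $v_0$ that traverses $C_1$ exactly $N+1$ times and then $C_2$ exactly $N$ times, so $v_0 w_1 = v_0$; likewise $v_0 w_2 = v_0$. Hence $w_1$ and $w_2$ are written on cycles $C_1^{N+1} C_2^{N}$ and $C_1^{N} C_2^{N}$ with the common initial vertex $v_0$, i.e. they are pairwise well-based. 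Combined with their regularity, this is exactly the conclusion of the proposition.

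I do not expect a serious obstacle: all of the genuinely delicate combinatorics — that sufficiently high powers $u^k v^l$ of ordered semi-regular words are regular, strictly $\rhd$-ordered, non-cyclic and pairwise non-conjugate — has already been absorbed into Corollary~\ref{Co2SrlrPwrs}, Proposition~\ref{pra} and Lemma~\ref{Le2powersConj}. The only point deserving a little care is to take the exponents simultaneously above the threshold of Corollary~\ref{Co2SrlrPwrs}, so that \emph{both} $w_1$ and $w_2$ are regular while still lying in different $\sim$-classes; using the $v$-exponent large and fixed and then two consecutive values $N$ and $N+1$ for the $u$-exponent arranges this.
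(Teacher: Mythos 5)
Your proof is correct and follows essentially the same route as the paper, which states this proposition as an immediate consequence of Corollary~\ref{Co2SrlrPwrs}: you simply make explicit the choice of high powers $u^{N+1}v^{N}$ and $u^{N}v^{N}$, their regularity and strict $\rhd$-ordering from the corollary, and the (easy) observation that concatenating the cycles $C_1,C_2$ at the common vertex keeps the words well-based there. No gap; this is just the detailed version of the paper's one-line argument.
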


It is sufficient to find two ordered semi-regular pairwise
well-based words, i.e. with common final and initial vertices. For
that it is enough to prove the existence of a sufficiently large
number of well-based ordered semi-regular words. In this case,
infinitely many of them will have  a common initial vertex, hence
pairwise well--based, and the main lemma follows.

\begin{lemma}
Let $u_1$ be a well-based word and $u_2$ a cyclically conjugate
word. Then  $u_2$ is also well-based.
\end{lemma}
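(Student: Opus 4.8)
The plan is to exploit the determinism of $G$: cyclic conjugation of a word corresponds precisely to shifting the base point along the cycle on which the word is written. First I would dispose of the trivial case. Write $u_1=cd$ and $u_2=dc$ as in the definition of cyclic conjugacy; if $c$ or $d$ is empty then $u_2=u_1$ and there is nothing to prove, so assume both $c$ and $d$ are nonempty.

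Since $u_1$ is well-based, fix a vertex $v$ of $G$ with $vu_1=v$; in particular the path spelling $u_1$ and starting at $v$ exists in $G$. Because $G$ is deterministic, this path has a unique factorization through the vertex reached after reading the prefix $c$; set $v':=vc$. Reading $d$ from $v'$ must then return to $v$, i.e. $v'd=v$, and both sub-paths (the one spelling $c$ from $v$ to $v'$, and the one spelling $d$ from $v'$ to $v$) exist in $G$.

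Now I would simply read $u_2=dc$ starting from $v'$: the prefix $d$ takes $v'$ to $v$ (that is, $v'd=v$), and then $c$ takes $v$ back to $v'$ (that is, $vc=v'$); hence the path spelling $u_2$ from $v'$ exists and $v'u_2=v'$. Thus $u_2$ is written on a cycle with initial vertex $v'$. It remains to check that $u_2$ is a genuine (nonzero) word: from $vu_1=v$ we get $vu_1^2=v$, so $u_1^2=cdcd$ is accepted by $G$ and hence nonzero, and $u_2=dc$ is a subword of $u_1^2$, so $u_2\neq 0$ as well (nonzero words are closed under taking subwords). Therefore $u_2$ is well-based.

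There is essentially no obstacle here beyond careful bookkeeping with the deterministic transition map $w\mapsto wv$. The one point worth a word of comment is that ``well-based'' refers to membership on a cycle through a distinguished vertex together with $vu=v$, a property manifestly invariant under cyclic conjugation, and not to semi-regularity of the word itself (which is \emph{not} preserved by cyclic conjugation). This is exactly why the lemma is useful in the sequel: starting from a single well-based word, every one of its cyclic conjugates is again well-based, and among these conjugates the lexicographically largest one is semi-regular, which is the feature the construction of the Main Lemma needs.
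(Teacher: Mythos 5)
Your proof is correct and follows exactly the paper's argument: conjugating $u_1=cd$ to $u_2=dc$ corresponds to shifting the base vertex to $v'=vc$, and then $v'u_2=(v'd)c=vc=v'$, which is precisely the computation in the paper (with $w_1=c$, $w_2=d$). The extra remarks on nonvanishing of subwords and on semi-regularity are harmless additions, not a different method.
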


\begin{proof}
Suppose $u_1=w_1w_2$, $u_2=w_2w_1$ and $v$ is a base vertex of
$u_1$. Then $v'=vw_1$ is a base vertex of $u_2$. Indeed,
$v'u_2=vw_1(w_2w_1)=v(w_1w_2)w_1=vw_1=v'$.
\end{proof}

\begin{cor}
If $u$ is well-based, then  semi-regular word conjugate to $u$ is
also well-based.
\end{cor}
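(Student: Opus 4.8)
The plan is to derive this directly from the Lemma just proved, which says that any cyclic conjugate of a well-based word is again well-based. So all that has to be supplied is the observation that the cyclic conjugacy class of $u$ actually contains a semi-regular word.

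First I would recall the basic facts about regular and semi-regular words collected in Section 3: a word is regular iff it is the lexicographically largest element of its cyclic conjugacy class, every regular word is a fortiori semi-regular (the strict inequalities in the definition of regular imply the non-strict ones in the definition of semi-regular), and conversely every semi-regular word is a power of a regular one. Granting these, let $z$ be the non-cyclic (primitive) root of $u$, so $u = z^k$, and let $z_0$ be the regular word cyclically conjugate to $z$. Set $\tilde u := z_0^{\,k}$. Then $\tilde u$ is semi-regular: its lex-largest cyclic conjugate is itself (any cyclic shift of $z^k$ has the form $(z')^{k}$ for $z'$ cyclically conjugate to $z$, and $z_0 \succ z'$ forces $z_0^{\,k} \succ (z')^{k}$ since $z_0, z'$ are distinct primitive words of equal length), so $\tilde u$ is a power of a regular word and hence semi-regular. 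Moreover $\tilde u$ is cyclically conjugate to $u$, since a cyclic shift carrying $z_0$ to $z$ carries $z_0^{\,k}$ to a cyclic conjugate of $z^k = u$; in fact $\tilde u$ is the unique semi-regular word in the conjugacy class of $u$, as primitivity of $z$ forces any expression $z_0^{\,k} \sim w^m$ with $w$ regular to have $w$ equal to the regular conjugate of $z$.

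Now I would finish by applying the Lemma: $u$ is well-based and $\tilde u$ is cyclically conjugate to $u$, hence $\tilde u$ is well-based. Concretely, writing $u = w_1 w_2$ and $\tilde u = w_2 w_1$, if $v$ is a base vertex of $u$ then $v w_1$ is a base vertex of $\tilde u$, exactly as in the proof of the Lemma. There is no genuine obstacle here; the only point that requires a moment's attention is the reduction in the non-primitive case $k > 1$, namely checking that the semi-regular representative one writes down, $z_0^{\,k}$, really lies in the cyclic conjugacy class of $u = z^k$ and not merely in that of its root $z$. Once that is in place, the Lemma does all the work.
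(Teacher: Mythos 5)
Your proposal is correct and follows the paper's intended route: the corollary is an immediate application of the preceding lemma (cyclic conjugates of well-based words are well-based) to the semi-regular word in the conjugacy class of $u$. The extra verification that such a semi-regular conjugate exists (via the regular conjugate of the primitive root) is sound but not a different method, just added detail the paper leaves implicit.
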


Let $u$ and $d$ be ordered pairwise well-based words (necessarily
not semi-regular). Then, for every $w(a,b),$ the word $w(u,d)$ is
non-zero. In particular, $u^kd^l$ are non-zero for all $k,l$.

Now Lemma \ref{Le2powersConj}, together with the fact that every
non--cyclic word uniquely corresponds to a cyclically conjugated
regular word, implies that there are infinitely many well--based
words. Infinitely many of them will have the same initial vertex
and so will be pairwise well based; hence the Main Lemma follows.

\section{Group theoretical applications}

$\Id(S)$ denotes the ideal, generated by the set $S$.

\begin{lemma}\label{lemma12}
Suppose $a$ and $b$ are homogenous elements of a graded associative
algebra $A$, such that the subalgebra generated by $a,b$ is free
associative algebra with free generators $a,b$. Let $a'$ (resp.
$b'$) be a linear combination of elements in $A$ with degrees
strictly greater than the degree of $a$ (resp. $b$). Let $\tilde
a=a+a'$, $\tilde b=b+b'$. Then the algebra generated by $\tilde a,
\tilde b$ is a free associative with free generators $\tilde a,
\tilde b$.
\end{lemma}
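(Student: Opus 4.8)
The plan is to exploit the grading and an induction on total degree to show that the substitution map on the free associative algebra $\Phi\langle x,y\rangle$ defined by $x\mapsto\tilde a$, $y\mapsto\tilde b$ is injective. Write $F=\Phi\langle x,y\rangle$ graded by total degree (with $\deg x=\deg a=:p$, $\deg y=\deg b=:q$), and let $\varphi\colon F\to A$ be the homomorphism with $\varphi(x)=a$, $\varphi(y)=b$, and $\psi\colon F\to A$ the homomorphism with $\psi(x)=\tilde a=a+a'$, $\psi(y)=\tilde b=b+b'$. By hypothesis $\varphi$ is injective; the goal is to deduce that $\psi$ is injective. First I would observe that $\psi$ differs from $\varphi$ by ``higher order terms'': if $m$ is a monomial in $x,y$ of degree $d$ (in the grading of $F$), then $\psi(m)=\varphi(m)+(\text{terms of degree}>d\text{ in }A)$, since substituting $a+a'$ for each $x$ and $b+b'$ for each $y$ and expanding, the leading term recovers $\varphi(m)$ and every other term has strictly larger degree because $a'$, $b'$ live in strictly higher degrees than $a$, $b$.

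Next I would set up the injectivity argument. Suppose $f\in F$ is nonzero with $\psi(f)=0$; write $f=f_0+f_1+\dots$ where $f_j$ is the degree-$j$ homogeneous component and let $d$ be minimal with $f_d\neq 0$. Then $\psi(f)=\psi(f_d)+\psi(f_{d+1})+\dots$, and by the previous paragraph $\psi(f_d)=\varphi(f_d)+(\text{degree}>d)$ while every $\psi(f_j)$ for $j>d$ has all its components in degrees $\ge$ something $>d$ — more precisely, in degree $\ge j$ — hence in degree strictly greater than $d$. (Here I use that the grading on $A$ is by nonnegative integers and that $\varphi$, $\psi$ are degree-preserving in the sense of not lowering degree; the subalgebra generated by the homogeneous elements $a,b$ sits inside the nonnegatively graded algebra $A$.) Therefore the degree-$d$ component of $\psi(f)$ is exactly $\varphi(f_d)$. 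Since $\psi(f)=0$, this forces $\varphi(f_d)=0$, and injectivity of $\varphi$ gives $f_d=0$, contradicting minimality. Hence $\psi$ is injective, so the subalgebra of $A$ generated by $\tilde a,\tilde b$ is free on these generators.

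The one technical point that needs care — and which I expect to be the main obstacle to write cleanly rather than a deep difficulty — is bookkeeping the degrees when $p=\deg a$ and $q=\deg b$ are not equal, so that the ``$F$-degree'' of a monomial and the ``$A$-degree'' of its image under $\varphi$ are related by a fixed linear formula in the numbers of $x$'s and $y$'s. One must check that the lowest-$A$-degree part of $\psi(f_d)$ is still $\varphi(f_d)$ and is not contaminated by higher $F$-degree components $f_j$ ($j>d$); this is automatic provided one grades $F$ so that $\varphi$ is degree-preserving onto a graded subalgebra of $A$, which can always be arranged since $a,b$ are homogeneous. With that normalization the argument above goes through verbatim. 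I would also remark that the statement and proof are insensitive to whether one works with one-sided or two-sided ideals: the claim is purely about freeness of a two-generated subalgebra, and the only input is injectivity of $\varphi$ together with the strict degree inequalities on $a'$, $b'$.
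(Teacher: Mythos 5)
Your proof is correct and takes essentially the same route as the paper: the paper's one-line argument is exactly that for any polynomial $h(u,v)$ with nonzero minimal component $h'(u,v)$, the minimal component of $h(\tilde a,\tilde b)$ is $h'(a,b)\neq 0$, which is the lowest-degree-component extraction you carry out (with the weighted grading and injectivity bookkeeping spelled out explicitly).
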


This lemma follows from the fact that for every polynomial
$h(u,v)$ with non-zero minimal component $h'(u,v)$, the minimal
component of $h(\tilde a,\tilde b)$ is $h'(a,b)\neq 0$.

We call an algebra {\it homogenous} if all its defining relations
are homogenous respect to the set of generators. Let $A$ be a
homogenous algebra, and $J$ be an ideal of $A$, generated by
elements of degree $\ge 1$. We call such algebra {\it good}. If
$A/J\equiv k$ and $\bigcap\limits_nJ^n=0$.  Every monomial algebra
is good. For any $x\in A,$ the image of $x$ in $A/J^n$ is not zero
for some $n$, so $A$ can be embedded into the projective limit
$\varprojlim A/J^n$.

\begin{lemma}\label{lemma13}
Let $B$ be a good homogenous algebra, such that $1+a$ and $1+b$
are invertible, $a,b\in J,$ and the elements $a$ and $b$ are free
generators of a free associative subalgebra $C$ of $B$. Then the
group generated by $1+a$ and $1+b$ is free.
\end{lemma}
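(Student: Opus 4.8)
The plan is to show that the obvious surjection from the free group $F(x,y)$ onto the subgroup $\langle 1+a,1+b\rangle$ of $B^{\times}$, sending $x\mapsto 1+a$ and $y\mapsto 1+b$, is injective. I would do this by passing to the associated graded/completion picture supplied by the hypothesis that $B$ is \emph{good}: the filtration $B\supset J\supset J^2\supset\cdots$ is separated ($\bigcap_n J^n=0$), so $B$ embeds in $\widehat B=\varprojlim B/J^n$, and since $a,b\in J$ the elements $1+a,1+b$ lie in the congruence subgroup $1+J\subset\widehat B^{\times}$, which is a group precisely because $1+a,1+b$ are assumed invertible (and in fact every element of $1+J$ is invertible in $\widehat B$, its inverse being the convergent series $\sum_{k\ge 0}(-a)^k$, etc.). So it suffices to prove that $1+a$ and $1+b$ generate a free group inside $1+J$.

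Now let $g=g(x,y)$ be a nontrivial reduced word in the free group, written as $g=x^{e_1}y^{f_1}x^{e_2}\cdots$ with nonzero integer exponents; I must show $g(1+a,1+b)\ne 1$ in $B$, equivalently $g(1+a,1+b)-1\ne 0$. The key point is to look at the lowest-degree component. Since $a$ and $b$ are homogeneous of positive degrees (being in $J$, which is generated in degree $\ge1$; here I use that $B$ is homogeneous and that $C$ is a \emph{graded} free subalgebra with homogeneous free generators $a,b$), each $1+a$ and $1+b$ has leading term $1$, and $(1+a)^{-1}=1-a+a^2-\cdots$, $(1+b)^{-1}=1-b+b^2-\cdots$ have the form $1+(\text{higher degree})$ as well. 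Substituting these series into the word $g$ and expanding, the term $1$ of total degree $0$ cancels, and the next contribution comes from picking, in each of the syllables $x^{e_i}$ and $y^{f_i}$, the linear-in-$a$ (resp. linear-in-$b$) term $e_i a$ (resp. $f_i b$) from $(1+a)^{e_i}=1+e_i a+\cdots$ and multiplying them in order. This gives
\[
g(1+a,1+b)-1 \;=\; e_1 f_1 e_2\cdots\, a\,b\,a\cdots \;+\;(\text{terms of strictly higher degree in }C).
\]
Because $a,b$ freely generate the free associative algebra $C$, the monomial $a^{\,}b^{\,}a\cdots$ corresponding to the reduced syllable pattern of $g$ is a nonzero basis element of $C$, and its coefficient $\prod_i e_i f_i\cdots$ is a nonzero integer (all syllable exponents are nonzero); moreover, by homogeneity no higher-degree term can cancel it. (This is exactly the mechanism of Lemma \ref{lemma12}, applied with $\tilde a$ the "$\le\deg$'' part extracted appropriately — or one may invoke Lemma \ref{lemma12} directly to the homogeneous algebra $C$ after separating the minimal component.) Hence $g(1+a,1+b)\ne 1$, so the surjection $F(x,y)\twoheadrightarrow\langle 1+a,1+b\rangle$ is an isomorphism.

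The step I expect to be the real obstacle is the clean bookkeeping of the "lowest nonzero homogeneous component'' argument: one must make sure that after substituting the infinite series $(1+a)^{\pm1},(1+b)^{\pm1}$ into a reduced group word and expanding, the minimal-degree surviving term is genuinely the free-associative monomial dictated by the reduced form of $g$ — in particular that no cancellation occurs among the infinitely many cross terms and that the word is indeed reduced as written so no syllables merge. This is precisely where the hypotheses are used in concert: homogeneity guarantees that the minimal component of $h(1+a,1+b)$ equals $h'(a,b)$ for the minimal component $h'$ of the corresponding noncommutative polynomial $h$; freeness of $C$ guarantees $h'(a,b)\ne0$; and goodness ($\bigcap J^n=0$) guarantees that a nonzero minimal component implies a nonzero element of $B$. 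Once this is organized, the conclusion is immediate. $\Box$
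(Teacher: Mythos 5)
Your overall strategy (embed $B$ into $\varprojlim B/J^n$, expand $(1+a)^{\pm1},(1+b)^{\pm1}$ as series, and detect a nonzero low-degree term using freeness of $C$ and the grading) is the right kind of argument and close in spirit to the paper's, but the key combinatorial step is wrong as written, and it is exactly the step that carries the whole lemma. You claim that for a reduced word $g=x^{e_1}y^{f_1}x^{e_2}\cdots$ one has $g(1+a,1+b)-1=e_1f_1e_2\cdots\,a\,b\,a\cdots+(\text{terms of strictly higher degree})$. This is false: when expanding $\prod_i(1+a)^{e_i}(1+b)^{f_i}$ one may take the linear term in only \emph{some} of the syllables and the constant $1$ in the others, and such terms have strictly \emph{lower} degree than the all-syllable product. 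For $g=xy$ the expansion gives $a+b+ab$, whose lowest term is $a+b$, not $ab$; for $g=xyx^{-1}y^{-1}$ it gives $ab-ba+\cdots$, not $abab+\cdots$. Whether these lower-degree cross terms cancel is precisely the content of the Magnus-type theorem you are trying to prove, so as it stands the proposal assumes the crucial point. Your closing paragraph flags this as ``the real obstacle'' but then dismisses it by appealing to Lemma \ref{lemma12}, which addresses a different issue (perturbing free homogeneous generators by higher-degree terms) and does not identify the minimal component of $g(1+x,1+y)-1$.

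The standard repair is not to look at the minimal component but at the coefficient of one \emph{fixed} monomial, namely $aba\cdots$ with exactly one letter per syllable: the only way to produce it in the expansion is to take the linear term from every syllable (a syllable contributing two or more letters creates an adjacent repetition absent from this monomial, and a syllable contributing none forces another to contribute at least two), so its coefficient is $\prod_ie_i\prod_jf_j$. This settles the claim when that integer is nonzero in the ground ring (characteristic zero); in characteristic $p$ it can vanish (take $g=x^p$), and one must instead invoke the classical fact that $1+x,1+y$ generate a free group in $k\langle\langle x,y\rangle\rangle$, which is what the paper does: it quotes this fact for the localization of $k\langle x,y\rangle$, truncates to $k\langle x,y\rangle/\Id(x,y)^{n}$, and transports $W(1+\bar x,1+\bar y)\ne1$ to $B$ through $B/J^n$ via the isomorphism $C\cong k\langle x,y\rangle$. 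For that transfer the relevant points are that modulo $J^n$ the inverses $(1\pm a)^{-1},(1\pm b)^{-1}$ are represented by truncated geometric series lying in $C$, and that a nonzero element of $C$ supported on monomials of (weighted) degree $<n$ cannot lie in $J^n\subseteq B_{\ge n}$ --- which uses homogeneity of $a,b$ (Lemma \ref{lemma12} exists precisely to reduce the non-homogeneous case to this one), rather than $\bigcap_nJ^n=0$ by itself. Until the surviving-term step is fixed along one of these lines, the proposal has a genuine gap.
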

\vspace{.25cm}

\noindent{\bf Remark.} Note that the pair of two different pairwise
well--bas\-ed words in a monomial algebra generates a free
associative subalgebra.
\medskip

\begin{proof}
Suppose $W(1+a,1+b)=1$ for some non-trivial word $W(x,y)\ne 1$ in
the free group. Consider free algebra $k\langle x,y\rangle$ and
its localization by $1+x,1+y$. Then $W(1+x,1+y)\ne 1$, and, for
some $n_0=n_0(W),$ $W(1+\bar{x},1+\bar{y})\ne 1$ in $\pi(k\langle
x,y\rangle)=k\langle x,y\rangle/\Id(x,y)^n$ for all $n\ge n_0$. In
each such image, the elements $1+x$ and $1+y$ are invertible, so
there is no need for localization.

On the other hand, because $A$ is good homogenous, the image of
$J^m\cap C$ under isomorphism $\phi$ generated by $a\to x, b\to y$
lies in $\Id(x,y)^{n_0},$ and
$$1=\pi(\phi(W(1+a,1+b)))=W(1+\bar{x},1+\bar{y})\ne 1.$$
Contradiction.
\end{proof}

Let $u$ and $v$ be two pairwise well based words. They have
canonical Lie bracket arrangement; let $[u]$ and $[v]$ be
corresponding Lie elements (obtained via opening the Lie
brackets). Notice that
$$
[u]=u+\mbox{lexicographically\ smaller\ terms},\
[v]=v+\mbox{lexicographically\ smaller\ terms}.
$$

Hence we have following

\begin{lemma}\label{lemma13a}
Let $u$ and $v$ be two pairwise well based words, and $[u]$ and
$[v]$ be the corresponding Lie elements (obtained via opening Lie
brackets). Then $[u], [v]$ generate as a free generators
$2$-generated free associative algebra (and also free Lie algebra
via commutator operation).
\end{lemma}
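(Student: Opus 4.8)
The plan is to deduce Lemma \ref{lemma13a} from the structural results already assembled, reading ``pairwise well based'' in the sense defined in Section~4: $u$ and $v$ are semi-regular words written on cycles $C_1,C_2$ sharing a common initial vertex $v_0$ with $v_0C_1=v_0C_2=v_0$, so that $W(u,v)\neq 0$ in $A$ for every word $W(a,b)$. Since the statement only involves opening the Lie brackets, I may assume $u\rhd v$ (swapping names if necessary) and that both are actually regular: by the Corollary following the conjugacy lemma a semi-regular well-based word is a power of a regular well-based one, and replacing $u$ by its regular root changes neither $[u]$ nor the subalgebra it generates inside $A$. So from now on $u\neq v$ are regular, $u\rhd v$, pairwise well based.

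First I would record the highest-term description of the Lie elements, which is standard Hall--Shirshov theory (stated in Section~3 of the excerpt): for a regular word $w$ there is a unique bracketing $[w]$ such that, after expanding, $[w]=w+(\text{lexicographically smaller words})$, and these $[w]$ form the Hall--Shirshov basis of the free Lie algebra. This gives the two displayed identities preceding the lemma. The crux is then to show that the associative subalgebra of $A$ generated by $[u]$ and $[v]$ is free of rank $2$. For this I would use the leading-term / elimination argument: a nonzero element of the free associative algebra $k\langle a,b\rangle$ is a linear combination of words $W(a,b)$, and I want to show that the corresponding combination of $W([u],[v])$ is nonzero in $A$. The key point is that the map $a\mapsto u,\ b\mapsto v$ already sends distinct words $W_1(a,b)\neq W_2(a,b)$ to distinct nonzero words $W_1(u,v)\neq W_2(u,v)$ in $A$ — this is exactly the ``free 2-generated subalgebra'' fact attached to exponential growth at the end of Section~4 (two cycles through a common vertex whose labels are not powers of a common word generate a free associative algebra), applied to $u,v$, which are not powers of a common word since $u\neq v$ are regular. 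Given that, one argues that the highest word occurring in $W([u],[v])$ (under the lexicographic order, or equivalently the length-lexicographic refinement) is precisely $W(u,v)$, because each bracket $[u],[v]$ has leading word $u,v$ and all corrections are lexicographically smaller; distinct $W$'s give distinct leading words $W(u,v)$, and these leading words are nonzero in $A$. Hence no nontrivial linear combination of the $W([u],[v])$ can vanish: pick the combination's highest leading word, it cannot be cancelled.

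Concretely the ordering I would use is the one induced on words of a fixed length by $\prec$, together with ``longer words beat shorter ones'' where needed; since the $[w]$ are homogeneous in $a,b$, comparing leading terms within each multidegree suffices and the homogeneity makes the bookkeeping clean. The main obstacle — and the step deserving real care — is verifying that substituting the ``corrected'' generators $[u]=u+(\text{smaller}),\ [v]=v+(\text{smaller})$ into a word $W$ does not, through interactions of the correction terms, produce a word larger than or equal to the intended leading word $W(u,v)$ in a way that could cause cancellation. This is precisely the content of Lemma \ref{lemma12} read one level down: replacing free generators $a,b$ of a free associative algebra by $a+a'$, $b+b'$ with $a',b'$ of strictly higher ``filtration'' (here: lexicographically/length-wise dominated rather than higher degree, so one runs the filtration in the opposite direction) preserves freeness, because the minimal (here: maximal-leading) component of $h(\tilde a,\tilde b)$ equals $h'(a,b)\neq 0$. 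Once freeness of the associative subalgebra on $[u],[v]$ is established, freeness as a Lie algebra under the commutator operation is automatic: the Lie subalgebra generated by $[u],[v]$ inside the free associative algebra they span is, by the universal property / Witt's theorem, the free Lie algebra on two generators, exactly as in the deduction of Theorem \ref{main} from the Main Lemma. This completes the proof.
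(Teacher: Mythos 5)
Your proof is correct and follows essentially the same route as the paper, whose own justification is just the displayed expansions $[u]=u+\mbox{lexicographically smaller terms}$, $[v]=v+\mbox{lexicographically smaller terms}$ together with the remark that two different pairwise well-based words generate a free associative subalgebra --- i.e.\ exactly your leading-term (Lemma \ref{lemma12}-style, with the lexicographic rather than degree filtration) argument, plus the standard Witt/Shirshov fact for the Lie part. The only quibble is your side remark that replacing a semi-regular $u$ by its regular root ``changes neither $[u]$ nor the subalgebra'': the canonical bracketing is only defined for regular words and the subalgebra does change under such a replacement, but this is immaterial since in the paper the lemma is applied to the regular words supplied by the Main Lemma.
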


For $n\geq 1$, consider the monomial algebra $A_{n+2}$ with
generators $x_1,\dots, x_{n+2}$ (see the Introduction). Adjoint a
unit $A_{n+2}'=A_{n+2}\cup \{1\}$. The elements $\bar x_i:=1+x_i$
have inverses $\bar x_i^{-1}:=1-x_i$. Consider the group
$A_{n+2}^\#$ generated by the elements $1+x_i$. Consider the
configuration of brackets in the generators of the free subalgebra
in the Lie algebra $A_{n+2}^\sim$ and write the correspondent Lie
elements in the group $A_{n+2}^\#$. Lemmas \ref{lemma12} and
\ref{lemma13}, \ref{lemma13a} imply that the subgroup $A_{n+2}^\#$
generated by these two elements will be free. It is clear from the
construction that all left-normalized commutators of length $k$ in
$A_{n+2}^\#$ which consists of fewer that $k$ different symbols
from $\{\bar x_1,\dots, \bar x_{n+2}\}$, are trivial. Hence,
Theorem \ref{group} follows.


\begin{thebibliography}{99}

\bibitem{AmitsurSmall0}
S.A. Amitsur, L.W. Small: Affine algebras with polynomial
identities, Suppl. ai Rendiconti del Circolo Mat. di Palermo, Serie
2, 31 (1992), 9-43.

\bibitem{Bahturin}
Bahturin Yu.,A.: Identities in Lie algebras. M.: Nauka, 1985., pages
448 (Russian). Engl. transl. (by Bakhturin): {\it Identical
relations in Lie algebras.} VNU Science Press, b.v., Utrecht, 1987.
x+309 pp.

\bibitem{Belov10}
Belov A.: About height theorem. {\it Comm. in Algebra}, 1995, vol.
23, N 9, p. 3551--3553.

\bibitem{BBL} A. Belov, V. Borisenko and V. Latyshev:
Monomial algebras. Algebra 4, {\it J, Math. Sci. (New York)} {\bf
87} (1997), 3463-3575.

\bibitem{G} M. Gromov:  Entropy and Isoperimetry for Linear and non-Linear Group
Actions, preprint.


\bibitem{Ufn} V. Ufnarovskij: Combinatorial and asymptotic methods in
algebra. Algebra, VI, 1--196, {\it Encyclopaedia Math. Sci.}, {\bf
57}, Springer, Berlin, (1995)

\end{thebibliography}
\end{document}